\DeclarePairedDelimiter\abs{\lvert}{\rvert}%
\DeclarePairedDelimiter\norm{\lVert}{\rVert}%
\let\oldabs\abs
\def\abs{\@ifstar{\oldabs}{\oldabs*}}
\let\oldnorm\norm
\def\norm{\@ifstar{\oldnorm}{\oldnorm*}}
\g@addto@macro\bfseries{\boldmath}
\newcommand{\A}{\mathcal{A}}
\newcommand{\C}{\mathcal{C}}
\newcommand{\M}{\mathcal{M}}
\newcommand{\K}{\mathcal{K}}
\newcommand{\N}{\mathcal{N}}
\newcommand{\T}{\mathbb{T}}
\newcommand{\z}{\zeta}
\newcommand{\conj}[1]{\overline{#1}}
\newcommand{\D}{\mathbb{D}}
\newcommand{\Po}{\mathcal{P}}
\newcommand{\cD}{\conj{\mathbb{D}}}
\newcommand{\ip}[2]{\big\langle #1, #2 \big\rangle}
\newcommand{\dist}[2]{\text{dist} ( #1, #2) }
\newcommand{\m}{\textit{m}}
\newcommand{\hil}{\mathcal{H}} 
\newcommand{\hd}{Hol(\D)}
\newcommand{\hb}{\mathcal{H}(b)}
\newcommand{\h}{\mathcal{H}}
\newcommand{\supp}[1]{\text{supp}({#1})}
\newtheorem{thm}{Theorem}[section]
\newtheorem{lemma}[thm]{Lemma}
\newtheorem{cor}[thm]{Corollary}
\newtheorem{prop}[thm]{Proposition}
\theoremstyle{definition}
\theoremstyle{definition}
\newtheorem*{remark}{Remark}
\newcommand{\Addresses}{{
		\bigskip
		\footnotesize
		
		Adem Limani, \\ \textsc{Centre for Mathematical Sciences, Lund University, \\
			Lund, Sweden}\\
		\texttt{adem.limani@math.lu.se}
		
		\medskip
		
		Bartosz Malman, \\ \textsc{KTH Royal Institute of Technology, \\
			Stockholm, Sweden}\\
			\texttt{malman@kth.se}
		
	}}
\begin{document}
\title{\textbf{Inner functions, invariant subspaces and cyclicity in $\Po^t(\mu)$-spaces}}
\date{ }
\author{Adem Limani and Bartosz Malman}
\maketitle

\begin{abstract} 
We study the invariant subspaces generated by inner functions for a class of $\Po^t(\mu)$-spaces which can be identified as spaces of analytic functions in the unit disk $\D$, where $\mu$ is a measure supported in the closed unit disk and $\Po^t(\mu)$ is the span of analytic polynomials in the usual Lebesgue space $L^t(\mu)$. Our measures define a range of spaces somewhere in between the Hardy and the Bergman spaces, and our results are thus a mixture of results from these two theories. For a large class of measures $\mu$ we characterize the cyclic inner functions, and exhibit some interesting properties of invariant subspaces generated by non-cyclic inner functions. Our study is motivated by a connection with the problem of smooth approximations in de Branges-Rovnyak spaces.
\end{abstract}

\section{Introduction}
\label{introsec}

Let $t>0$ and $\mu$ be a compactly supported positive finite Borel measure in the complex plane $\mathbb{C}$. Denote by $\Po^t(\mu)$ the closure of the analytic polynomials in the usual Lebesgue space $L^t(\mu)$. The classical Hardy spaces $H^t$ can be defined as the closure of analytic polynomials in $L^t(d\m)$, where $d\m$ is the Lebesgue measure on the unit circle $\T$. The Bergman spaces can likewise be defined as the closures of analytic polynomials in spaces $L^t(dA)$, where $dA$ is the area measure on the unit disk $\D$. In both cases, it so happens that these closures can be identified with spaces of analytic functions, but for general measures $\mu$ this is no longer true. For instance, if  $\mu$ is a measure supported on $\T$ and contains a singular part $\mu_s$, then $\Po^{2}(\mu)$ contains $L^2(\mu_s)$ as a direct summand, which is certainly not a space of analytic functions. 
For absolutely continuous measures supported on $\T$ this phenomenon is explained by a famous theorem of Szeg\H{o}. Namely, if $\omega$ is a positive integrable function on $\T$, then the space $L^2(\omega d\m)$ can be identified with a space of analytic functions if and only if $\int_\T \log(\omega) d\m > -\infty$. Moreover, if the log-integrability condition fails, then we actually have that $\Po^2(  \omega dm ) = L^2(\omega dm)$. 

In the present paper, we shall introduce a class of measures $\mu$ supported on the closed unit disk $\cD$ for which the space $\Po^t(\mu)$ can be identified with a sufficiently well-behaved space of analytic functions on $\D$. We will stay within the realm of Bergman-type spaces, but we will be working with spaces considerably larger than the usual Hardy spaces. Our purpose is to study subspaces invariant for the forward shift operator $\M_z : f(z) \to zf(z)$ which are generated by inner functions. Our work is motivated by a connection with certain approximation problems in the theory of de Branges-Rovnyak spaces which we will briefly explain below.

In the case $t=2$, the spaces $\Po^2(\mu)$ and the shift operator $\M_z$ are universal models for the class of subnormal operators (see \cite{conway1991theory}). A subnormal operator is the restriction to an invariant subspace of a normal operator on a Hilbert space, and any such operator which also has a cyclic vector is unitarily equivalent to $\M_z$ on $\Po^2(\mu)$ for some measure $\mu$ supported in the complex plane. The spaces $\Po^2(\mu)$ which will fall into our framework here will be the ones on which $M_z$ acts as a \textit{completely non-isometric} contraction, in the sense that the ability to identify our spaces as spaces of analytic functions is equivalent to non-existence of invariant subspaces of $M_z$ on which it acts as an isometry.

We will now survey a number of results on $\Po^t(\mu)$-spaces as a way of justifying our choice of the class of measures $\mu$ which will be presented below. If $\mu$ is supported on the closed unit disk $\cD$, then a necessary condition for $\Po^t(\mu)$ to be a space of analytic functions is that the space contains no characteristic functions $1_F$ for any measurable subset $F \subset \cD$. This property is sometimes called \textit{irreducibility}, as it was for instance in \cite{aleman2009nontangential}. The case we will be studying is when the restriction $\mu|\D$ of $\mu$ to the open disk $\D$ already makes $\Po^t(\mu|\D)$ into a space of analytic functions on $\D$. Then the possibility to identify $\Po^t(\mu)$ with a space of analytic functions is equivalent to non-existence of a sequence of analytic polynomials $p_n$ such that 
\[ 
\lim_{n \to \infty} \int_\D |p_n|^t d\mu|\D = 0
\] 
and 
\[ 
\lim_{n \to \infty} \int_\T |p_n - 1_F|^t d\mu|\T = 0
\] 
for some measurable subset $F \subseteq \T$, $\mu|\T$ being the restriction of $\mu$ to $\T$. We will see that this condition is equivalent to $M_z: \Po^t(\mu) \to \Po^t(\mu)$ admitting no invariant subspace on which it acts isometrically. In the literature, the phenomenon above is referred to as \textit{splitting} (at least when $F = \T$). Vast amount of research has been carried out investigating when splitting can occur, but the exact mechanism is still not understood. For instance, splitting has been shown to occur if the measure $\mu|\D$ decays extremely fast close to the boundary and $\mu|\T = \omega d\m$ with $\log(\omega) \not\in L^1(\T)$, see for instance \cite{kriete1990mean}. However, there exists an example in \cite{kegejanex} which produces a compact subset $F$ of $\T$ such that $\Po^2(dA + 1_F d\m)$ splits, even though, clearly, the measure $dA$ does not decay at all. An important result is \cite[Theorem D]{kriete1990mean}: if $\mu|\D$ does not decay too fast and if $\mu|\T = \omega d\m$ with $\int_I \log(\omega) d\m > -\infty$ for some circular arc $I$, then the situation above, with the two limits simultaneously being zero, cannot occur for any Borel set $F$ contained in $I$. Thus in this case, if $\omega$ is supported on $I$, then $\Po^2(\mu)$ becomes a space of analytic functions. 

The weight $\omega$ will be living on sets more general than intervals, as in the already cited result from \cite{kriete1990mean}. For a closed set $E$ on $\T$, we consider the complement $\T \setminus E = \cup_{k} A_k$, where $A_k$ are disjoint open circular arcs. The set $E$ is a \textit{Beurling-Carleson set} if the following condition is satisfied: 
\begin{gather} \label{carlesoncond}
\sum_k |A_k|\log \left( \frac{1}{ |A_k| } \right) < \infty. 
\end{gather}
Notation $|A_k|$ stands for the Lebesgue measure of the arc $A_k$.
The sets satisfying \eqref{carlesoncond} will appear in two contexts below, and we will need to consider both Beurling-Carleson of positive Lebesgue measure, and of zero Lebesgue measure. Based on the discussion and results stated above, we will introduce the following conditions on the positive finite measure $\mu$ supported on $\cD$.

\begin{enumerate}


\item[(A)] The measure $d\mu|\T = \omega d\m$ is absolutely continuous and non-zero only on a set $E$ which can be written as a countable union $E = \cup_k E_k$ of Beurling-Carleson sets of positive measure for which we have that
\[ 
\int_{E_k} \log(\omega) d\m > -\infty.
\]

\item[(B)] The measure $\mu|\D$ is a Carleson and a reverse Carleson measure for some standard weighted Bergman spaces, and thus satisfies the following Carleson embeddings:
\[
\Po^{t}\left((1-|z|)^{\beta}dA \right)  \hookrightarrow \Po^{t}(\mu|\D) \hookrightarrow \Po^{t}\left( (1-|z|)^{\alpha} dA\right)
\]
for some $-1 < \beta \leq \alpha$.
\end{enumerate}
The condition (A) is present primarily to make $\Po^t(\mu)$ into a space of analytic functions, but is alone not sufficient without some assumption on $\mu|\D$, and indeed it is an interplay between the assumption (A) and (B) that ensures that $\Po^t(\mu)$ is a space of analytic functions. On the other hand, a remarkable result by Khrushchev says that if $\omega$ is concentrated on a set which contains no Beurling-Carleson set of positive measure, then $\Po^t( d\mu|\D + \omega dm)$ splits for any standard weighted Bergman space $\Po^t(\mu|\D)$ (see \cite{khrushchev1978problem}, Theorem 4.1). This indicates that Beurling-Carleson sets of positive measure are the appropriate carriers of weights on the unit circle in order to ensure $\Po^t(\mu)$ being a space of analytic functions contained in the standard weighted Bergman spaces.
Beyond this first condition, we shall also implicitly assume that $\log(\omega)$ is not integrable on the entire unit circle $\T$, since integrability of this quantity leads to a space $\Po^t(\mu)$ which is essentially a Hardy space. The key point of assumption (B) is that it is readily satisfied by any standard Bergman weight $d\mu|\D =(1-|z|)^{\alpha}dA$, for $\alpha > -1$, and the reader can think of $\mu|\D$ as such a weight. Our techniques allow however to replace the standard Bergman weight by such a two-sided Carleson measure, which is a somewhat more general condition.

We move on to the presentation of our main results. As a starting point, we must of course establish that $\Po^t(\mu)$ can genuinely be identified as a space of analytic functions. This is a consequence of \thref{irreducibilitypropcarleson} which is proved in Section \ref{irreducibilitysection}. In fact, as soon as this is established, this result can be used in combination with the work of Aleman, Richter and Sundberg in \cite{aleman2009nontangential} to infer further properties of the space $\Po^t(\mu)$. This combination establishes the following result.

\begin{thm}\thlabel{irredTheorem}
Let $\mu = \mu|\D + \mu|\T$ be a positive finite measure on $\cD$ satisfying the assumptions $(A)$ and $(B)$. For any $t \in (0,\infty)$, the space $\Po^t(\mu)$ is a genuine space of analytic functions: the restriction $f|\D$ of any element $f \in \Po^t(\mu)$ is an analytic function on $\D$ and $f|\D$ extends uniquely to the element $f$ in $\Po^t(\mu)$. The space satisfies the following additional properties:
\begin{enumerate}[(i)]
\item Each $f \in \Po^t(\mu)$ has non-tangential limits $\mu|\T$-almost everywhere on $\T$. 
\item The index of each $M_z$-invariant subspace of $\Po^t(\mu)$ is equal to 1.
\end{enumerate}
\end{thm}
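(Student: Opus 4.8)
The plan is to derive all three conclusions from the structure theory of $\Po^t(\mu)$-spaces developed by Aleman, Richter and Sundberg in \cite{aleman2009nontangential}, with \thref{irreducibilitypropcarleson} serving as the bridge from the concrete hypotheses (A)--(B) to the abstract hypotheses of that theory. The point is that the analytic structure, the existence of non-tangential boundary values, and the index computation all become automatic once one knows two things: that every point of $\D$ is an analytic bounded point evaluation for $\Po^t(\mu)$, and that $\Po^t(\mu)$ is \emph{irreducible}, in the sense that $M_z$ has no nonzero invariant subspace on which it acts isometrically.

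First I would extract the bounded point evaluations directly from assumption (B). The reverse Carleson embedding $\Po^t(\mu|\D) \hookrightarrow \Po^t((1-|z|)^{\alpha} dA)$ gives, for every analytic polynomial $p$ and every $\lambda \in \D$, the chain
\[
|p(\lambda)| \lesssim \|p\|_{\Po^t((1-|z|)^{\alpha} dA)} \lesssim \|p\|_{\Po^t(\mu|\D)} \leq \|p\|_{\Po^t(\mu)},
\]
where the first inequality is the boundedness of point evaluation on the standard weighted Bergman space (with constant locally uniform in $\lambda$), the second is (B), and the last is $\mu|\D \leq \mu$. Hence each $\lambda \in \D$ is a bounded point evaluation, the functional $p \mapsto p(\lambda)$ extends continuously to $\Po^t(\mu)$, and $\lambda \mapsto f(\lambda)$ is analytic for every $f \in \Po^t(\mu)$. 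This already shows that the restriction $f|\D$ is analytic on $\D$.

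The substance of the theorem is irreducibility, which is precisely \thref{irreducibilitypropcarleson}. Granting it, injectivity of the restriction map $f \mapsto f|\D$ follows: an $f$ with $f|\D \equiv 0$ vanishes $\mu|\D$-almost everywhere and is therefore carried by $\T$, so the closure of $\{z^n f : n \geq 0\}$ would be a nonzero invariant subspace on which $M_z$ acts as an isometry, contradicting irreducibility. This yields the uniqueness of the analytic extension and completes the identification of $\Po^t(\mu)$ as a space of analytic functions, whose set of analytic bounded point evaluations is $\D$. Feeding irreducibility and this abpe set into \cite{aleman2009nontangential}, conclusion (i) is their theorem on the existence of non-tangential limits, applied with boundary carrier $E = \supp{\mu|\T}$, and conclusion (ii) is their theorem that every nonzero $M_z$-invariant subspace of an irreducible $\Po^t(\mu)$ has index one.

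I expect the only genuine obstacle to lie in \thref{irreducibilitypropcarleson} itself, that is, in excluding sequences of polynomials with $\int_\D |p_n|^t \, d\mu|\D \to 0$ and $\int_\T |p_n - 1_F|^t \, d\mu|\T \to 0$ simultaneously. This is exactly the point at which the Beurling-Carleson structure of $E$ and the log-integrability of $\omega$ on each $E_k$ must be balanced against the two-sided Carleson control of $\mu|\D$, in the spirit of the theorems of Khrushchev and Kriete-MacCluer cited above. Everything else in the present statement is bookkeeping together with the citations to \cite{aleman2009nontangential}; the one additional point worth verifying is that the Aleman-Richter-Sundberg conclusions are genuinely available throughout the range $t \in (0,\infty)$ claimed here, which requires invoking the forms of their arguments valid also for $0 < t < 1$.
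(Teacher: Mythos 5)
Your proposal is correct and follows essentially the same route as the paper: the authors likewise reduce everything to \thref{irreducibilitypropcarleson} (complete non-isometricity of $M_z$, proved via the Khrushchev-type simultaneous approximation result) and then cite Aleman--Richter--Sundberg \cite{aleman2009nontangential} for the non-tangential limits and the index-one property. Your added details --- the bounded point evaluations from the reverse Carleson embedding in (B), and the injectivity of $f \mapsto f|\D$ via the isometric invariant subspace generated by a function carried by $\T$ --- are exactly the bookkeeping the paper leaves implicit, and your caveat about verifying the $0<t<1$ case of the cited results is well placed.
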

The \textit{index} notion in the theorem refers to the co-dimension of the image of $M_z$ when restricted to an invariant subspace. Our main motivation for this work was the study of the closed $M_z$-invariant subspaces of $\Po^t(\mu)$-spaces generated by inner functions. For a positive singular measure $\nu$ supported on $\T$, the \textit{singular inner function} $S_\nu$ is defined by the formula \begin{equation} \label{SingIn} S_\nu(z) = \exp\Big(-\int_\T \frac{\zeta + z}{\zeta - z} d\mu(\z)\Big). 
\end{equation}
A general inner function $\theta$ is the product $\theta = BS_\nu$ of a singular inner function $S_\nu$ and a convergent Blaschke product $B$ of the form \[ B(z) = \prod_{n=1}^\infty \frac{|a_n|}{a_n} \frac{a_n-z}{1-\conj{a_n}z}.
\]  
One of the main questions we want to answer is: for which inner functions $\theta$ are the polynomial multiples of $\theta$ dense in the space $\Po^t(\mu)$? We compare to the situation in Hardy and Bergman spaces. A classical result of operator theory characterizes the $M_z$-invariant subspaces of the Hardy space $H^p$ with $0<p<\infty$ as subspaces of the form $\theta H^2$. In particular, every inner function $\theta$ generates an invariant subspace properly contained in $H^p$. A description of invariant subspaces generated by inner functions exists also for the Bergman spaces, and it involves the notion Beurling-Carleson sets of zero measure, defined as above in \eqref{carlesoncond}. Every positive singular measure $\nu$ on $\T$ can be uniquely decomposed into a sum \begin{equation}
\label{BCKdecomp} \nu = \nu_\C + \nu_\K \end{equation} of mutually orthogonal measures where $\nu_\C$ is concentrated on a countable union of Beurling-Carlesons sets of measure zero, while $\nu_\K$ vanishes on every such Beurling-Carleson set. For a measure $\nu$, by \textit{concentrated on a set $F$}, we mean that $\mu(B) = \mu(B\cap F)$ for every Borel set $B$. The decomposition in \eqref{BCKdecomp} corresponds to a factorization $S_\nu = S_{\nu_\C}S_{\nu_\K}$. We will call $\nu_\C$ for the \textit{Beurling-Carleson} part of $\nu$ and $\nu_\K$ for the \textit{Korenblum-Roberts} part of $\nu$, with similar designations for the singular inner functions constructed from these measures. The works of Roberts in \cite{roberts1985cyclic} and Korenblum in \cite{korenblum1981cyclic} independently establish that the singular inner function $S_{\nu_\K}$ is a cyclic vector for the $M_z$-operator on the standard weighted Bergman spaces, while $S_{\nu_\C}$ generates a proper invariant subspace. The subspace generated by a non-cyclic inner function satisfies a very special property which we will below denote by (P) and describe next.

Let $\N^+$ denote the Smirnov class $f = u/v$ of quotients of bounded analytic functions, with $v$ being outer (see \cite{garnett}). Consider a pair $(X, \theta)$, where $X$ is a space of analytic function on $\D$ which contains bounded functions and on which the $M_z$ operator acts continuously, and $\theta$ be an inner function. Let $[\theta]_X$ be the smallest closed $M_z$-invariant subspace of $X$ containing $\theta$. Consider the following property of the pair $(X, \theta)$. 

\begin{enumerate}
\item[(P)] The invariant subspace $[\theta]_X$ satisfies $[\theta]_X \cap \N^+ \subseteq \theta \N^+$. \label{propertyP} \end{enumerate}

In other words, if $\theta h_n$ converges to $f$ in the topology of $X$, $h_n$ are polynomials and $f$ is contained in $\N^+$, then $f/\theta \in \N^+$. This phenomenon of \textit{permanence} of the inner factor has appeared implicitly in less general setting in the work of Roberts in \cite{roberts1985cyclic}. When $X$ is a standard weighted Bergman space, then then property (P) for $(X, \theta)$ is actually equivalent to density of classes of functions with some degree of regularity on $\T$ in the classical model spaces $K_\theta := H^2 \ominus \theta H^2$. Indeed, in \thref{BCpermanence} below, this permanence property for measures $\mu|\D$ satisfying (B) and Beurling-Carleson inner functions is deduced from the density result of \cite{smoothdensektheta}.

To further illustrate the importance of the concept, we note that the fundamental and useful fact that the functions with continuous extensions to $\T$ are dense in model spaces, established by Aleksandrov in \cite{aleksandrovinv}, can be related to the permanence of the inner factor under weak-star convergence in the space of Cauchy transforms. The density of the same class of functions in de Branges-Rovnyak spaces $\hb$ established in \cite{dbrcont} can be tied to a similar phenomenon. Establishing such corresponding properties for $\Po^t(\mu)$-spaces is related instead to approximations by functions with certain degree of smoothness in $\hb$-spaces, and the authors elaborate on this in \cite{DBRpapperAdem}. In short, functions from the smoothness class $\h^\tau$ defined below in \eqref{Xbetadef} are dense in space $\hb$ if and only if for the measure $d\mu = (1-|z|)^{\tau - 1} dA + (1-|b|^2) d\m$, the space $\Po^2(\mu)$ is a space of analytic functions and the property (P) holds for $\Po^2(\mu)$ and the inner factor of $b$. Precisely this connection is the main motivation for the present research.

Our results are of the kind discussed in the above paragraph. Before stating them we will need to set some further notations and conventions. The symbols $\mu|\T, \mu|\D, \omega, E, E_k$ appearing in the assumptions (A) and (B) will be used throughout the paper to denote the related objects. Thus, $E = \cup_k E_k$ denotes the carrier set of the weight $d\mu|\T = \omega d\m$ on $\T$ (that is, the set on which $\omega$ is concentrated), and the sets $E_k$ are Beurling-Carleson sets of positive measure. Further, the support of the measure $\mu|\T$ will be denoted $F$. It could be larger than the carrier $E$, but $E = F$ if $E$ is a single Beurling-Carleson set, keeping in mind that finite unions of Beurling-Carleson sets, again are Beurling-Carleson sets. The notation $\| \cdot \|_{\mu,t}$ denotes the norm on the space $L^t(\mu)$ for $t \geq 1$, and for $t \in (0,1)$ the usual translation-invariant metric. All of our results are valid for $t \in [1,\infty)$, and some are valid also for $t \in (0,\infty)$, and this will be indicated. For a Borel set $S$ in the closed disk we denote by $\mu|S$ the restriction of $\mu$ to the set $S$. For a measure $\nu$, the measures $\nu_\C$ and $\nu_\K$ denote the Beurling-Carleson and Korenblum-Roberts parts respectively, as in \eqref{BCKdecomp}.

The following theorem establishes the inner factor permanence property (P) for a large class of inner functions in $\Po^t(\mu)$-spaces. 
 
\begin{thm} \thlabel{InnerPermanenceTheorem}
Let $t \in [1,\infty)$ and $\mu$ satisfy (A) and (B), and let $\theta = BS_\nu$ be an inner function. Let $\nu_\K|E$ be the restriction of $\nu_\K$ to $E$. If $\theta_0 = BS_{\nu_\C}S_{\nu_{\K}|E}$, then the pair $(\Po^t(\mu), \theta_0)$ satisfies property (P), and thus generates a proper $M_z$-invariant subspace.
\end{thm}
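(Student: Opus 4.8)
The plan is to show that the inner function $\theta_0$ divides every $f \in [\theta_0]_{\Po^t(\mu)} \cap \N^+$ in the Smirnov class, by detecting its two constituent factors through two different parts of the measure $\mu$. Write $\theta_0 = \theta_1 \theta_2$ with $\theta_1 = B S_{\nu_\C}$ and $\theta_2 = S_{\nu_\K|E}$. Since $\nu_\C$ and $\nu_\K$ are mutually singular, the inner data of $\theta_1$ and $\theta_2$ are carried by disjoint sets, so it suffices to prove that each of $\theta_1$ and $\theta_2$ divides the inner part of $f$ separately; divisibility by the product then follows from mutual singularity. Fix polynomials $h_n$ with $\theta_0 h_n \to f$ in $\Po^t(\mu)$, and recall that $f/\theta_0 \in \N^+$ is equivalent to $B \mid B_f$ together with $\nu_\C + \nu_\K|E \leq \sigma_f$, where $B_f$ and $\sigma_f$ denote the Blaschke and singular data of the inner part of $f$.

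First I would dispose of $\theta_1 = B S_{\nu_\C}$ using the disk part of the measure. Restricting the convergence to $\D$ gives $\theta_0 h_n \to f$ in $\Po^t(\mu|\D)$, and since $\theta_0 h_n = \theta_1(\theta_2 h_n)$ with $\theta_2 h_n$ a bounded analytic function, hence an element of $\Po^t(\mu|\D)$ approximable by polynomials, the limit $f$ lies in $[\theta_1]_{\Po^t(\mu|\D)}$. As $\theta_1$ is a Beurling--Carleson inner function and $\mu|\D$ satisfies (B), the already established permanence result \thref{BCpermanence} applies to the pair $(\Po^t(\mu|\D), \theta_1)$ and yields $f/\theta_1 \in \N^+$, that is $B \mid B_f$ and $\nu_\C \leq \sigma_f$.

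The heart of the matter is to show that $\theta_2 = S_{\nu_\K|E}$ divides $f$, i.e. $\nu_\K|E \leq \sigma_f$; this is where the boundary part $\mu|\T = \omega\,d\m$ enters and the Beurling--Carleson hypothesis on the carrier becomes essential. By disjointification it is enough to prove $\nu_\K|E_k \leq \sigma_f$ for each Beurling--Carleson set $E_k$ with $\int_{E_k}\log(\omega)\,d\m > -\infty$. Because the embedding in (B) furnishes bounded point evaluations on $\Po^t(\mu)$, the convergence $\theta_0 h_n \to f$ is uniform on compact subsets of $\D$; combined with $B \mid B_f$ from the previous step and the fact that the singular factors are zero-free, this shows that $G := f/\theta_0 = (f/\theta_1)/S_{\nu_\K|E}$ is analytic on $\D$, that $h_n \to G$ uniformly on compacts, and that the nontangential boundary values satisfy $|G| = |f|$ a.e. on $E$ (since $|\theta_0| = 1$ on $\T$). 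Suppose, for contradiction, that $\nu_\K|E_k \not\leq \sigma_f$; then $G = g/S_\rho$ for some $g \in \N^+$ and a nonzero positive singular measure $\rho \leq \nu_\K|E_k$ supported on $E_k$, so that $\log|G|$ carries the positive singular contribution $\int_\T P_z\, d\rho$ near the support of $\rho$, where $P_z$ is the Poisson kernel. On the other hand, each $h_n$ has no singular factor, so $\log|h_n(z)| \leq \int_\T P_z\log|h_n|\,d\m$, and passing to the limit gives $\log|G(z)| \leq \liminf_n \int_\T P_z\log|h_n|\,d\m$. The contribution of $E_k$ to the right-hand side is controlled via the $L^t(\omega\,d\m)$-convergence of the boundary values together with $\log(\omega)\in L^1(E_k)$, while the contribution of the uncontrolled region $\T\setminus E$ is rendered negligible, as $z$ approaches the support of $\rho$, by the Beurling--Carleson geometry of $E_k$, which permits the construction of an outer function localizing the estimate to a neighbourhood of $E_k$. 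The resulting upper bound is incompatible with the singular blow-up $\int_\T P_z\,d\rho$ of $\log|G|$, forcing $\rho = 0$. Hence $\nu_\K|E_k \leq \sigma_f$ for every $k$, and therefore $\nu_\K|E \leq \sigma_f$.

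Combining the two steps, the mutually singular inequalities $\nu_\C \leq \sigma_f$ and $\nu_\K|E \leq \sigma_f$ give $\nu_\C + \nu_\K|E \leq \sigma_f$, which together with $B \mid B_f$ shows $\theta_0 \mid f$, i.e. $f/\theta_0 \in \N^+$; this is exactly property (P). Properness of $[\theta_0]_{\Po^t(\mu)}$ is then immediate, for if it equalled the whole space we would have $1 \in [\theta_0]_{\Po^t(\mu)} \cap \N^+ \subseteq \theta_0\N^+$, forcing $1/\theta_0 \in \N^+$ and hence $\theta_0$ constant. I expect the main obstacle to be precisely the boundary estimate of the third paragraph: reconciling the subharmonic Poisson bound on $E_k$, where the weight $\omega$ may degenerate, with the complete absence of control on $\T\setminus E$, and it is exactly the interplay of the log-integrability of $\omega$ on $E_k$ and the Beurling--Carleson property of $E_k$ that makes this delicate step go through.
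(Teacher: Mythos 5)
Your treatment of the factor $\theta_1 = BS_{\nu_\C}$ is exactly the paper's: restrict the convergence to $\mu|\D$ and invoke \thref{BCpermanence}. The departure, and the problem, is your third paragraph, where you replace the paper's argument for $\theta_2 = S_{\nu_\K|E}$ by the subharmonicity estimate $\log|G(z)| \le \liminf_n \int_\T P_z \log|h_n|\,d\m$, which is supposed to contradict the singular blow-up of $\int_\T P_z\,d\rho$. Two things go wrong. First, you have no upper bound whatsoever for $|h_n|$ on $\T\setminus E$: the hypotheses control $\theta_0 h_n$ only in $L^t(\omega\,d\m)$ and $\omega$ is carried by $E$, so on the complementary arcs the $h_n$ may be arbitrarily large on the boundary; the ``outer function localizing the estimate to a neighbourhood of $E_k$'' is precisely where all the difficulty sits, and it is never constructed. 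Second, even on $E_k$ the estimate does not close as an order-of-magnitude matter: the only available bound is $\int_{E_k} P_z \log^+|h_n|\,d\m \le \|P_z\|_\infty\bigl(\tfrac1t\sup_n\int_{E_k}|h_n|^t\omega\,d\m + \tfrac1t\int_{E_k}|\log\omega|\,d\m\bigr) = O\bigl((1-|z|)^{-1}\bigr)$, whereas the singular term you want to contradict satisfies only $\int_\T P_z\,d\rho \gtrsim \rho(I(\zeta_0,1-|z|))/(1-|z|)$, which tends to infinity at $\rho$-a.e.\ $\zeta_0$ but is $o\bigl((1-|z|)^{-1}\bigr)$ because $\rho$ is finite. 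The controllable upper bound therefore dominates the singular lower bound, and no contradiction results. Making this scheme work would require genuinely finer tools (harmonic measure of a sawtooth domain, density points, an actual localization construction), none of which are supplied.

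The paper avoids all of this by duality (\thref{Kpermanence}). For $\theta$ with singular measure supported on $E$ it builds a family $F_p$, dense in $K_\theta$, as Riesz projections of $\theta\,\overline{\zeta p s H}$, where $H$ is outer with $|H| = \min(1,\omega)$ on $E$ and $s$ is the outer cutoff of \thref{smoothcutofflemma}, vanishing to high order at the endpoints of the complementary arcs. The Cauchy integral defining $F_p$ splits into a piece over $E$, which is the Cauchy transform of an $L^{t'}(\omega\,d\m)$ function because $|H|\le\omega$ there, and a piece over $\T\setminus E$, which is $C^n$-smooth thanks to $s$ and hence the Cauchy transform of an $L^{t'}(\mu|\D)$ function by \thref{CauchySmoothSolutions}. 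By \thref{CauchyDualChar} the functionals $\Lambda_{F_p}$ are bounded on $\Po^t(\mu)$, and annihilating them forces $u\in\theta H^2$. Your overall architecture --- split $\theta_0$ into two factors, treat them with the two halves of $\mu$, recombine by mutual singularity, deduce properness --- is sound, but the central analytic step for the Korenblum--Roberts-on-$E$ factor is missing.
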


The difference from the Bergman space case is that in our setting, the singular inner function constructed from the measure $\nu_\K|E$, which lives on the carrier of $\mu|\T$, satisfies the property (P) with respect to $\Po^t(\mu)$. In the Bergman spaces, this inner function is cyclic. 

Next, we present our result on cyclicity of inner functions in $\Po^t(\mu)$.


\begin{thm} \thlabel{cyclicitytheorem}
Let $\mu$ satisfy (B) and let $F$ denote the support of the measure $\mu|\T$. Suppose $S_\nu$ is a Korenblum-Roberts inner function, in the sense that $\nu$ vanishes on Beurling-Carleson sets of zero Lebesgue measure. If $\nu$ is concentrated on the complement of $F$, then $S_\nu$ is a cyclic vector for $M_z$ on $\Po^{t}(\mu)$, for all $t>0$.
\end{thm}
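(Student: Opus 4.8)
The plan is to show that the constant function $1$ lies in the closed $M_z$-invariant subspace $[S_\nu]$ generated by $S_\nu$. Since $[S_\nu]$ is $M_z$-invariant, this forces $z^n = M_z^n 1 \in [S_\nu]$ for every $n \geq 0$, so $[S_\nu]$ contains all analytic polynomials, which are dense in $\Po^t(\mu)$ by definition; hence $[S_\nu] = \Po^t(\mu)$ and $S_\nu$ is cyclic. The whole problem therefore reduces to producing analytic polynomials $p_n$ with $\|p_n S_\nu - 1\|_{\mu,t} \to 0$. (If $\nu = 0$ the claim is trivial, so I assume $\T \setminus F$ is a nonempty open set, whence $F$, and in particular the carrier $E$ of $\omega$, is a proper closed subset of $\T$.) Since $|S_\nu| = 1$ $m$-a.e.\ on $\T$, one has $p_n S_\nu - 1 = S_\nu(p_n - \overline{S_\nu})$ there, and the norm splits as
\[
\|p_n S_\nu - 1\|_{\mu,t}^t = \int_\D |p_n S_\nu - 1|^t \, d\mu|\D + \int_\T |p_n - \overline{S_\nu}|^t \, \omega \, dm,
\]
so I must drive both terms to zero with a single sequence $p_n$.

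For the disk term I would invoke hypothesis (B): the embedding $\Po^t((1-|z|)^{\beta} dA) \hookrightarrow \Po^t(\mu|\D)$ shows that convergence in the standard weighted Bergman norm dominates convergence in the $\mu|\D$-norm. Because $\nu$ is a Korenblum–Roberts measure, the theorem of Korenblum and Roberts gives cyclicity of $S_\nu$ in every standard weighted Bergman space; more precisely, the Roberts decomposition of $\nu$ yields polynomials with $p_n S_\nu \to 1$ strongly in $\Po^t((1-|z|)^{\beta} dA)$, and hence in $\Po^t(\mu|\D)$, for every $t > 0$. This settles the disk term in isolation.

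For the boundary term the hypothesis that $\nu$ is concentrated on $\T \setminus F$ is decisive. On the one hand, $\overline{S_\nu}$ restricts to a function of $L^t(\T, \omega\, dm)$ which is individually approximable by analytic polynomials: since $F$ is a proper closed subset of $\T$, a function annihilating all $z^n$, $n \geq 0$, against $\omega\,dm$ would be the boundary value of an anti-analytic function supported on a proper closed subset of $\T$, and must vanish by the F.\ and M.\ Riesz theorem, so analytic polynomials are complete in $L^t(\T, \omega\,dm)$. On the other hand, because $\nu$ carries no mass on $F$, the function $1/S_\nu = \overline{S_\nu}$ that the disk construction approximates is comparatively regular across $F$, and I would use this to keep the boundary integral $\int_\T |p_n|^t \omega\, dm$ under control for the very polynomials furnished by the Roberts decomposition.

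The main obstacle is precisely this simultaneous control: the Roberts approximants are manufactured for the Bergman norm on $\D$ and a priori know nothing about $\int_\T |p_n|^t \omega\, dm$, so the core of the argument is to glue the two approximation schemes into one sequence. I expect to resolve this through the separation of $\supp{\nu}$ from $F$, which forces the disk-approximants to remain uniformly bounded in $L^t(\T,\omega\,dm)$ and to converge there to $\overline{S_\nu}$. For $t > 1$ it then suffices to secure a uniform $\Po^t(\mu)$-bound on $p_n S_\nu$ together with pointwise $\mu$-a.e.\ convergence to $1$, after which reflexivity and weak compactness place $1$ in the weak, and hence the norm, closure of the subspace $\{p S_\nu : p \text{ a polynomial}\}$; to cover the full range $t > 0$ uniformly I would instead extract genuine strong convergence directly from the Roberts construction adapted to the separated boundary.
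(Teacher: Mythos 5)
Your reduction to finding polynomials with $\|p_nS_\nu-1\|_{\mu,t}\to 0$ is right, and you have correctly located the crux: the Roberts approximants are built for the Bergman norm and carry no a priori control of $\int_\T|p_n|^t\omega\,dm$. But the proposal stops exactly there. The sentences ``I would use this to keep the boundary integral under control for the very polynomials furnished by the Roberts decomposition'' and ``the separation \dots forces the disk-approximants to remain uniformly bounded in $L^t(\T,\omega\,dm)$ and to converge there to $\overline{S_\nu}$'' assert the missing step rather than prove it, and as stated the assertion is doubtful: in Roberts' scheme the Corona solutions $g_j$ are only bounded by $n_j^{cc_1K}$ in $H^\infty$, so the partial products $g_1\cdots g_M$ have no reason to stay bounded on $\T$, let alone converge to $\overline{S_{\nu^M}}$ in $L^t(\omega\,dm)$. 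The separation of $\operatorname{supp}\nu$ from $F$ controls $|S_\nu|$ near $F$, but it says nothing about the size of the \emph{polynomial} factors on $F$, which is where $\omega$ lives.

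The paper closes this gap not by bounding the Roberts polynomials on $\T$ but by changing the Corona data. In Roberts' original argument the companion to $S_{\nu_j}$ is the monomial $z^{n_j}$; here it is replaced by $f_{n_j}=z^{n_j}h_{n_j}$, where $h_{n_j}$ is an outer function with $|h_{n_j}|=n_j^{-\sigma n_j}$ on $F$ and $|h_{n_j}|=1$ off $F$ (\thref{h_n}). This $f_{n_j}$ is still bounded below by $n_j^{-cc_1}$ off $\Omega_{n_j}$ (this is where the separation $\rho>0$ is used, via the Poisson-integral estimate for $\log|h_{n_j}|$ at points far from $F$), yet its full $\Po^t(\mu)$-norm, \emph{including the boundary integral against $\omega\,dm$}, is at most $n_j^{-3cc_1K}$. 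The Corona theorem then gives $\|S_{\nu_j}g_j-1\|_{\mu,t}\le\|f_{n_j}\|_{\mu,t}\|h_j\|_\infty\le n_j^{-2cc_1K}$ directly in the $\mu$-norm, so the boundary term never has to be controlled separately, and the telescoping product argument goes through for all $t>0$ without any weak-compactness detour. Two further points you elide: the hypothesis only says $\nu$ is concentrated on $\T\setminus F$, which does not give $\operatorname{dist}(\operatorname{supp}\nu,F)>0$; the paper first reduces to that case by truncating $\nu$ to $\{\zeta:\operatorname{dist}(\zeta,F)>1/n\}$ and showing $\|S_{\nu-\nu_n}-1\|_{\mu,t}\to 0$ along a subsequence (\thref{lemreduc}). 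And the assumption actually used on $\mu|\D$ is only the monomial decay $\|z^n\|_{\mu|\D,t}\lesssim n^{-\beta}$, weaker than the full embedding you invoke.
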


Note specifically that the assumption (A) in \thref{cyclicitytheorem} is not necessary, and that $F$ refers to the (closed) support of the measure $\mu$, which can indeed be larger than the set $E = \cup_{n} E_n$ appearing in condition (A). However, as a significant upside, the theorem applies to sets $F$ which are way more general than Beurling-Carleson sets. 


Combining \thref{InnerPermanenceTheorem} and \thref{cyclicitytheorem}, we can formulate a corollary which actually gives us complete description of invariant subspaces generated by inner functions, according to property (P) and cyclicity, for a large class of measures $\mu$. In fact, since outer functions are easily seen to be cyclic vectors in the spaces considered, the below result can be also considered as a description of invariant subspaces generated by bounded analytic functions. 

\begin{cor} \thlabel{maintheorem}
Let $t \in [1,\infty)$ and $\mu$ satisfy conditions (A) and (B) with $\mu|\T$ living on a single Beurling-Carleson set $E$. Let $\theta = BS_\nu$ be an inner function and decompose $\nu$ as \[ \nu = \nu_\C + \nu_\K|E + \nu_\K|E^c, \] where $E^c$ is the complement of $E$ with respect to $\T$. 
\begin{enumerate}[(i)]

\item If $\theta_0 = BS_{\nu_\C}S_{\nu_{\K}|E}$, then $\theta_0$ generates a proper $M_z$-invariant subspace and the pair $(\Po^t(\mu), \theta_0)$ satisfies property (P).

\item $S_{\nu_{\K}|E^c}$ is a cyclic vector for the shift operator $M_z$ on $\Po^t(\mu)$. 
\end{enumerate}
\end{cor}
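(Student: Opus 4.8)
The plan is to obtain both parts directly from the two preceding theorems, after recording the simplifications that follow from $\mu|\T$ living on a single Beurling-Carleson set $E$. In that case the support $F$ of $\mu|\T$ coincides with $E$, as noted among the conventions above, so $E^c = F^c$. Furthermore, the three-term decomposition $\nu = \nu_\C + \nu_\K|E + \nu_\K|E^c$ is merely the canonical splitting $\nu = \nu_\C + \nu_\K$ with the Korenblum--Roberts part further resolved according to whether its mass sits on $E$ or on $E^c$; correspondingly $S_\nu = S_{\nu_\C}S_{\nu_\K|E}S_{\nu_\K|E^c}$.

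Part (i) is then a verbatim instance of \thref{InnerPermanenceTheorem}. The inner function $\theta_0 = BS_{\nu_\C}S_{\nu_\K|E}$ is precisely the one that theorem builds from $\theta = BS_\nu$, since $\nu_\K|E$ is the restriction of the Korenblum--Roberts part of $\nu$ to the carrier $E$. As $\mu$ satisfies (A) and (B) by hypothesis, the theorem yields at once that $(\Po^t(\mu), \theta_0)$ has property (P) and that $[\theta_0]_{\Po^t(\mu)}$ is proper; no further argument is needed.

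For part (ii) I would check that $\nu_\K|E^c$ satisfies the hypotheses of \thref{cyclicitytheorem}. The condition (B) holds by assumption. Next, $\nu_\K|E^c$ is a Korenblum--Roberts measure: by definition $\nu_\K$ assigns no mass to any Beurling-Carleson set of zero Lebesgue measure, and restricting to $E^c$ can only remove mass, so $\nu_\K|E^c$ vanishes on all such sets as well. Finally $\nu_\K|E^c$ is concentrated on $E^c = F^c$, the complement of the support of $\mu|\T$. The cyclicity theorem then gives that $S_{\nu_\K|E^c}$ is cyclic for $M_z$ on $\Po^t(\mu)$ for every $t > 0$, in particular on the range $t \in [1,\infty)$ of the corollary.

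The only delicate point --- and it is bookkeeping rather than a genuine obstacle --- is confirming that the two measure-theoretic decompositions are compatible: that slicing $\nu_\K$ by $E$ and $E^c$ preserves the Korenblum--Roberts property of each piece and deposits all of $\nu_\K|E^c$ outside $F$. Once this is in place the corollary follows by concatenating the two theorems without any additional analysis.
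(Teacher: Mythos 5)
Your proposal is correct and follows exactly the route the paper intends: the corollary is obtained by invoking \thref{InnerPermanenceTheorem} verbatim for part (i) and by checking, as you do, that $\nu_\K|E^c$ is a Korenblum--Roberts measure concentrated off the support $F=E$ so that \thref{cyclicitytheorem} gives part (ii). The bookkeeping you flag (restriction preserves the Korenblum--Roberts property, and $E^c\subseteq F^c$) is all that is needed.
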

Note that the result allows us to decompose every inner function into two factors, one satisfying the property (P), and one being cyclic. In particular, one easily deduces from the above that the invariant subspace generated by a bounded analytic function $f = \theta F$, with $F$ outer, equals the invariant subspace $[\theta_0]$. The result has an obvious Hardy-Bergman interplay flavour. Close to the support of the measure $\nu$ on the circle, the properties of the invariant subspace generated by an inner function $S_\nu$ are Hardy-like, while away from this support, the situation resembles what happens in the Bergman spaces. 

The rest of the paper is structured as follows. In Section \ref{irreducibilitysection} we will recall techniques from \cite{kriete1990mean} and \cite{khrushchev1978problem} and prove that our assumptions (A) and (B) allow us to identify $\Po^t(\mu)$ as a space of analytic functions on $\D$. In Section \ref{Cauchydualsec} we develop some duality theory which we will use in Section \ref{permanencesec}. Sections \ref{permanencesec} and \ref{cyclicsection} deal with proofs of our main theorems related to invariant subspaces generated by inner functions.

\section{Assumption (A), (B), and irreducibility of the shift}
\label{irreducibilitysection}

The below proposition is a version of a deep result on \textit{simultaneous approximation} by Khrushchev, found in \cite[Theorem 3.1]{khrushchev1978problem}. We shall introduce an alternate and minimal version of the result which is suitable for our further applications. We also give a sketch of the proof, which is essentially a minor modification of the original argument in \cite{khrushchev1978problem}. The proposition will be our principal technical tool in establishing that $M_z: \Po^t(\mu) \to \Po^t(\mu)$ is completely non-isometric if $\mu$ satisfies assumptions (A) and (B). In this section, we allow $t$ to be any finite positive number.

\begin{prop}\thlabel{hruscevKOtheorem}
Let $\mu|\T = \omega d\m$ be a measure satisfying condition (A) and which is concentrated on a single Beurling-Carleson set $E$ of positive measure. Let $\mu|\D$ be a measure satisfying the condition (B). Assume that there exists a sequence of analytic polynomials $\{p_n\}_{n \geq 1}$, an analytic function $f \in \Po^t(\mu|\D)$ and $g \in L^t(\omega d\m)$ such that \[ \lim_{n \to \infty} \int_\D |p_n-f|^t d\mu|\D = 0 \] and \[ \lim_{n \to \infty} \int_\T |p_n-g|^t \omega d\m = 0.\] If $g$ is non-zero, then $f$ is non-zero.
\end{prop}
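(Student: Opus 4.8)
The plan is to argue by contraposition: assuming $f \equiv 0$, I would show that $g = 0$ almost everywhere with respect to $\omega \, d\m$ on $E$. Thus the hypotheses become $\int_\D |p_n|^t \, d\mu|\D \to 0$ together with $\int_E |p_n - g|^t \omega \, d\m \to 0$, and the goal is to prove $\int_E |g|^t \omega \, d\m = 0$. Since the statement must hold for every finite $t > 0$, including the range $t < 1$ where no useful duality is available, I would avoid functional-analytic duality entirely and instead aim for a concrete reverse inequality. First, I would invoke the reverse Carleson half of assumption (B) to pass from $\int_\D |p_n|^t d\mu|\D \to 0$ to $\int_\D |p_n|^t (1-|z|)^\beta \, dA \to 0$, which also yields $p_n \to 0$ uniformly on compact subsets of $\D$ via boundedness of point evaluations on the Bergman space. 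The whole problem then reduces to showing that the boundary mass $\int_E |p_n|^t \omega \, d\m$ is controlled by the interior mass $\int_\D |p_n|^t d\mu|\D$, up to terms that vanish along the sequence, since this forces $\int_E |p_n|^t \omega \, d\m \to 0$ and hence, via $p_n \to g$, that $g = 0$.

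The bridge between boundary and interior is an outer function adapted to the pair $(E,\omega)$, and its construction is where assumption (A) and the Beurling-Carleson condition \eqref{carlesoncond} enter decisively. Using $\int_E \log \omega \, d\m > -\infty$, I would build an outer function $\Phi$ with $|\Phi| = \omega^{1/t}$ on $E$ and with $\log|\Phi|$ prescribed on the complementary arcs $A_k$ of $E$ so that $|\Phi|$ decays rapidly toward $E$ and vanishes to high order at the endpoints of the $A_k$. The summability $\sum_k |A_k| \log(1/|A_k|) < \infty$ is exactly the condition guaranteeing both that $\log|\Phi| \in L^1(\T)$, so that $\Phi$ genuinely exists as an outer function, and that $\Phi$ enjoys enough boundary regularity, in the spirit of the Taylor--Williams construction. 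For a polynomial $p$ the product $F = p\Phi$ then lies in $H^t$, and its boundary integral splits as $\int_\T |F|^t \, d\m = \int_E |p|^t \omega \, d\m + \int_{E^c} |p|^t |\Phi|^t \, d\m$, so the first summand is precisely the quantity I must control.

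The main step, and the principal obstacle, is to establish the reverse estimate $\int_\T |p\Phi|^t \, d\m \lesssim \int_\D |p|^t \, d\mu|\D$, effectively along the sequence $p_n$. I would do this by expressing the Hardy-space boundary norm of the smooth $H^t$ function $F = p\Phi$ as an interior area integral through a Littlewood--Paley / Green identity, and then dominating that area integral by $\int_\D |p|^t (1-|z|)^\beta \, dA$ using the rapid decay of $\Phi$ and $\Phi'$ near $E^c$ together with the reverse Carleson embedding from (B). The delicate point is the analysis near the endpoints of the complementary arcs $A_k$, where $\Phi$ transitions from carrying the full weight to vanishing; controlling the accumulated error there is exactly what the Beurling-Carleson condition \eqref{carlesoncond} is designed to make summable. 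Granting this estimate, applying it to $p_n$ and using the interior vanishing gives $\int_E |p_n|^t \omega \, d\m \to 0$, whence $\int_E |g|^t \omega \, d\m = 0$ and $g = 0$, completing the contrapositive. I expect the construction of $\Phi$ balancing these several competing requirements, and the rigorous transfer of the $E^c$-boundary contribution into a controllable interior integral, to be the hardest parts; this is in essence the minor modification of Khrushchev's original argument to which the authors allude.
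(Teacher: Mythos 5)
Your plan founders on the ``main step'': the reverse estimate $\int_\T |p\Phi|^t \, d\m \lesssim \int_\D |p|^t \, d\mu|\D$ for polynomials $p$ is false, and no choice of outer function $\Phi$ with $|\Phi|^t = \omega$ on $E$ can make it true. If such an inequality held on the dense set of polynomials, multiplication by $\Phi$ would extend to a bounded map from the weighted Bergman space into $H^t$; then every Bergman-space function $h$ would have $h\Phi \in H^t$, hence finite non-tangential limits a.e.\ on the positive-measure set where the outer function $\Phi$ has non-vanishing boundary values (which contains a.e.\ point of $E$), and so $h$ itself would have non-tangential limits a.e.\ on $E$. This contradicts the existence of Bergman-space functions (e.g.\ suitable lacunary series) with non-tangential limits almost nowhere. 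The same obstruction shows up concretely in your Littlewood--Paley attempt: $(p\Phi)' = p'\Phi + p\Phi'$, and near $E$ the factor $|\Phi|$ is not small, so you are left needing $\int_\D |p|^{t-2}|p'|^2(1-|z|)\,dA \lesssim \int_\D |p|^t(1-|z|)^{\alpha}dA$, which is exactly the false assertion that the Hardy norm is dominated by a Bergman norm. The structural point you are missing is that the boundary mass of $p_n$ on $E$ cannot be controlled by the interior mass alone; the hypothesis that $p_n$ converges (hence is bounded) in $L^t(\omega\, d\m)$ must enter as an \emph{input}, whereas your contrapositive scheme tries to derive the boundary conclusion purely from the interior one. (A secondary slip: the reverse Carleson half of (B) gives $\|p_n\|_{(1-|z|)^{\alpha}dA,t}\lesssim \|p_n\|_{\mu|\D,t}$, so you land in the $\alpha$-weighted space, not the $\beta$-weighted one.)

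The paper's proof uses the boundary hypothesis in an essentially logarithmic, not norm-quadratic, way. One removes the Carleson squares over the complementary arcs of $E$ to form a sawtooth domain $G$ with rectifiable boundary and lets $\nu$ be harmonic measure at $0$. The growth bound $|p_n(z)|\lesssim \|p_n\|_{\mu|\D,t}(1-|z|)^{-K}$ from (B) controls $\int_{\partial G\cap\D}\log^+|p_n|\,d\nu$ (here the Beurling--Carleson condition makes $\int_{\partial G\cap \D}\log^+\frac{1}{1-|z|}d\nu$ finite), while the bound $\int_E\log^+|p_n|\,d\nu \lesssim 1+\int_E|p_n|^t\omega\,d\m+\int_E|\log\omega|\,d\m$ --- which uses $d\m - d\nu|E\ge 0$, the outer function with $|h|^t=\omega$, and condition (A) --- controls the piece on $E$ using precisely the boundary convergence you discard. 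Together these make $p_n\circ\phi$ a bounded sequence in the Nevanlinna class ($\phi$ a Riemann map onto $G$), and the Khinchin--Ostrowski theorem then identifies the a.e.\ boundary limit $g$ with the boundary values of the locally uniform limit $f$, which is what actually links the two hypotheses. I would redirect your effort toward this compactness/normal-families mechanism rather than a direct reverse inequality.
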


\begin{proof}
As indicated above, we will follow closely the proof of \cite[Theorem 3.1]{khrushchev1978problem}. There, an auxiliary domain $G$ is constructed. If $\T \setminus E = \cup_n A_n$ are the complementary arcs to $E$, we associate to each arc $A_n$ its usual Carleson square $C_n$, and then the domain $G$ is obtained by removing the squares $C_n$ from $\D$. It is not difficult to see that the boundary $\partial G$ is a rectifiable curve, and that $\partial G \cap \T = E$. Let $\nu$ be the harmonic measure on $\partial G$ associated with the point $0 \in G$. Since assumption (B) is satisfied and thus $\Po^t(\mu|\D)$ is continuously contained in a standard weighted Bergman space, it follows that we can find constants $C,K > 0$, independent of $n$, such that 
\[
|p_n(z)| \leq C \|p_n\|_{\mu_\D, t} (1-|z|)^{-K}.
\]
Thus
\begin{gather} 
 \int_{\partial G} \log^+ |p_n| d\nu = \int_{\partial G \cap \D} \log^+(|p_n|) d\nu + \int_E \log^+(|p_n|) d\nu  \nonumber \\
 \lesssim 1 + \int_{\partial G \cap \D} \log^+\Big(\frac{1}{1-|z|}\Big) d\nu +  \int_E \log^+(|p_n|) d\nu . \label{s3eq2}
\end{gather}
The main technical difficulty of the proof of \cite[Theorem 3.1]{khrushchev1978problem} is using the assumption that $E$ is a Beurling-Carleson in order to show that the integral over $\partial G \cap \D$ is finite. For the proof of this fact, we refer the reader to the cited paper. Proceeding, assumption (A) ensures that there exists an outer function $h$ which satisfies $|h|^t = \omega$ on $E$, and from this we obtain 
\begin{equation} \label{s3eq1} 
\int_E \log^+(|p_n|) d\nu \lesssim \int_E \log^+(|p_n h|) d\nu + \int_E \log^+(|h|^{-1}) d\nu.
\end{equation} 
We will now show that the Lebesgue measure $dm$ dominates $d\nu$ on $E$. For any positive continuous function $\psi$ on $\T$, let $P \psi$ denote its Poisson extension to $\D$, and note that \[ \int_\T \psi d\m = P \psi (0) = \int_{\partial G} P \psi d\nu \geq \int_E P \psi d\nu = \int_E \psi d\nu.\] Thus $d\m - d\nu|E$ is a non-negative measure, and so \eqref{s3eq1} implies that 
\begin{equation} \label{s3eq3}
\int_E \log^+(|p_n|) d\nu \lesssim 1 + \int_E |p_n|^t \omega d\m + \int_E |\log \omega| d\m
\end{equation} which, by condition (A) and the convergence of the sequence $\{p_n\}_n$ in $L^t(\omega d\m)$, is a uniformly bounded quantity. Combining \eqref{s3eq2}, \eqref{s3eq1} and \eqref{s3eq3}, we get that \[ \sup_{n} \int_{\partial G} \log^+(|p_n|) d\nu < \infty. \] Let $\phi : \D \to G$ be a Riemann mapping satisfying $\phi(0) = 0$. It extends to a homeomorphism between $\T$ and $\partial G$. A change of variable on the previous expression implies that 
\begin{equation} \label{sec3eq4}
 \sup_{n} \int_{\T} \log^+(|p_n \circ \phi|) d\m < \infty.
\end{equation}
Because $p_n \circ \phi$ is continuous up to the boundary of $\D$, \eqref{sec3eq4} implies that $p_n \circ \phi$ is a bounded sequence in the Nevanlinna class of the unit disk (see \cite[Chapter II, Section 5]{garnett}). By modifying $g$ on a set of Lebesgue measure zero, we can assume that $g$ is a non-zero Borel measurable function. Extracting a subsequence, we may assume that the $p_n \to g$ almost everywhere on $E$, and by taking away a set of Lebesgue measure zero, we can construct a set $E_0$ which is a Borel set, $|E| = |E_0|$ and $p_n \to g$ everywhere on $E_0$. Note that $E_0$ is a subset of $\T \cap \partial G$ and so both $\nu(E_0)$ and $|E_0|$ make sense. Since $\partial G$ is rectifiable, it follows that $\nu$ is absolutely continuous with respect to the Lebesgue measure, and thus $\nu(E_0) > 0$. 

The function $g \circ \phi$ is measurable since $g$ is assumed to be a Borel function and $\phi$ is a continuous function. Thus $p_n \circ \phi \to g \circ \phi$ everywhere on the Borel set $\tilde{E_0} := \phi^{-1}(E_0)$ which satisfies $|\tilde{E_0}| = \nu(E_0) > 0$. Of course, $p_n \circ \phi \to f \circ \phi$ uniformly on compact subsets of $\D$, and $f \circ \phi$ is in the Nevanlinna class by \eqref{sec3eq4} and the subsequent remark. We are in the situation of the classical Khinchin-Ostrowski theorem (see \cite[Section 2 of Chapter 3]{havinbook}), and its application leads us to the following conclusion: the sequence $p_n \circ \phi$ converges on $\tilde{E_0}$ to the boundary values of $f \circ \phi$. Thus $f$ cannot be zero, since $p_n \circ \phi$ converges on $\tilde{E_0}$ to $g \circ \phi$, a non-trivial function on $\tilde{E_0}$.
\end{proof}

With the above technical result at hand, we now prove the main result of this section.

\begin{prop} \thlabel{irreducibilitypropcarleson}
If $\mu$ satisfies properties (A) and (B), then the shift operator $M_z$ defined on $\Po^t(\mu)$ is a completely non-isometric operator, in the sense that there exists no non-trivial closed invariant subspace of $X \subset \Po^t(\mu)$ with the property that $f \in X$ implies that $\|M_zf\|_{\mu,t}  = \|f\|_{\mu,t}$.
\end{prop}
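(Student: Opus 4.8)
The plan is to argue by contraposition: assuming that $M_z$ acts isometrically on some closed invariant subspace $X \neq \{0\}$, I would produce a nonzero element whose existence is forbidden by the simultaneous approximation result \thref{hruscevKOtheorem}. The first step is to extract from the isometry condition the fact that every $f \in X$ must vanish on the open disk. Since $\mu$ is supported in $\cD$ and $|z| = 1$ on $\T$, the equality $\|M_z f\|_{\mu,t} = \|f\|_{\mu,t}$ reduces to $\int_\D (|z|^t - 1)|f|^t \, d\mu|\D = 0$. The integrand is non-positive and is strictly negative wherever $f \neq 0$ on $\D$ (because $|z| < 1$ there), so $f = 0$ holds $\mu|\D$-almost everywhere. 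Because condition (B) continuously embeds $\Po^t(\mu|\D)$ between standard weighted Bergman spaces, this space is a genuine space of analytic functions, and hence the restriction $f|\D$ is the zero function.

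Next, fix a nonzero $f \in X$. By the previous step $f|\D = 0$, so the boundary function $g := f|\T$ must be nonzero in $L^t(\omega \, d\m)$, for otherwise $f$ would be the zero element of $\Po^t(\mu)$. Choosing analytic polynomials $p_n \to f$ in $L^t(\mu)$, I obtain simultaneously $\int_\D |p_n|^t \, d\mu|\D \to 0$ and $\int_\T |p_n - g|^t \omega \, d\m \to 0$. This is precisely the configuration of \thref{hruscevKOtheorem}, except that the weight $\omega$ is carried by the countable union $E = \cup_k E_k$ rather than by a single Beurling-Carleson set.

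The reduction to a single set, which I expect to be the only point requiring genuine care, is handled by localization. Since $g \neq 0$ in $L^t(\omega \, d\m)$ and $\omega$ is concentrated on $E = \cup_k E_k$, there is an index $k$ with $\int_{E_k} |g|^t \omega \, d\m > 0$. Restricting the boundary convergence to $E_k$ gives $\int_{E_k} |p_n - g|^t \omega \, d\m \to 0$, while the convergence on $\D$ is unaffected. The weight $\omega 1_{E_k}$ satisfies condition (A) on the single Beurling-Carleson set $E_k$, because $\int_{E_k} \log \omega \, d\m > -\infty$, and $\mu|\D$ satisfies (B). Applying \thref{hruscevKOtheorem} with disk-limit $0$ and nonzero boundary limit $g|E_k$ would force the disk-limit to be nonzero, contradicting that it equals $0$. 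Hence no such subspace $X$ exists, and $M_z$ is completely non-isometric.
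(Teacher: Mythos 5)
Your argument is correct and follows essentially the same route as the paper: use the isometry identity $\int_\D(|z|^t-1)|f|^t\,d\mu|\D=0$ to force the disk part of $f$ to vanish, then feed the approximating polynomials into \thref{hruscevKOtheorem}, localized to a single Beurling--Carleson set $E_k$ carrying a nonzero portion of the boundary trace, to force the boundary part to vanish as well. The only (cosmetic) differences are that you phrase the boundary step contrapositively where the paper argues directly, and you need only one application of the isometry rather than all powers of $M_z$.
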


\begin{proof}
Since $\Po^t(\mu) \subseteq \Po^t(\mu|\D) \oplus L^t(\mu|\T)$, we can represent $f \in \Po^t(\mu)$ as a tuple $(f_\D, f_\T)$, where $f_\D \in \Po^t(\mu|\D)$ and $f_\T \in L^t(\mu|\T)$. Note that if $\|M^n_zf\|_{\mu,t} = \|f\|_{\mu,t}$ for all integers $n\geq1$, then we must necessarily have that $f_\D \equiv 0$ and thus $f = (0, f_\T)$. Since the tuple $(0, f_\T)$ is a limit of polynomial tuples of the form $(p,p)$ in the norm of $\Po^t(\mu|\D) \oplus L^t(\mu|\T)$, it is also a limit of such tuples in the space $\Po^t(\mu| \D) \oplus L^t(\mu|E)$, where $\mu|E$ is the restriction to a Beurling-Carleson set (of positive Lebesgue measure) $E \subseteq \T$ of the measure $\mu|\T$. It follows by \thref{hruscevKOtheorem} that $f_\T$ is zero almost everywhere with respect to $\mu|E$ for every Beurling-Carleson set. Thus, since $\mu|\T$ is concentrated on a union of Beurling-Carleson sets, $f_\T$ also is zero almost everywhere with respect to $\mu|\T$. Consequently, $M_z$ is completely non-isometric on $\Po^t(\mu)$. 
\end{proof}

As a consequence, $\Po^t(\mu)$ can be identified with a space of analytic functions on $\D$ and every function in $ \Po^t(\mu)$ is uniquely determined by its analytic restriction to $\D$. \thref{irredTheorem} of Section \ref{introsec} now follows. 

\section{Cauchy duals of $\Po^t(\mu)$-spaces}

\label{Cauchydualsec}

Let $\mu$ be a positive measure supported on $\cD$ and $G \in L^1(\mu)$. The Cauchy transform $C_{G\mu}$ is the analytic function on $\D$ given by the formula \[ C_{G\mu} (z) = \int_{\cD} \frac{G(\zeta)d\mu(\z)}{1-\conj{\zeta}z}, \quad z \in \D. \]
The following realization of the dual space of $\Po^t(\mu)$ is particularly well-suited for our purposes. Here and throughout this section $t \in [1,\infty)$ and $t'$ is the H\"older conjugate exponent such that $1/t + 1/{t'} = 1$, of course interpreting $t'$ as infinity if $t = 1$. 

\begin{prop} \thlabel{CauchyDualChar}
Let $g \in \hd$ and $f$ be a polynomial. Then 
\begin{equation} \Lambda_g(f) := \lim_{r \to 1} \int_\T f(r\zeta)\conj{g(r\zeta)} d\m(\zeta) \end{equation} extends to a bounded linear functional on $\Po^t(\mu)$ if and only if $g$ is of the form $g = C_{G\mu}$ for some $G \in L^{t'}(\mu)$. If this is the case, then we have that \begin{equation} \label{LambdaNorm}
\abs{\Lambda_g(f)} = \inf \{\|G\|_{\mu,t'} : g = C_{G\mu} \}.
\end{equation} Moreover, every bounded linear functional on $\Po^t(\mu)$ can be represented in this way. 
\end{prop}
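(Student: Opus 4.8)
The plan is to identify the dual of $\Po^t(\mu)$ via the standard duality $\big(\Po^t(\mu)\big)^* \cong L^{t'}(\mu)/\Po^t(\mu)^\perp$, and then to translate an abstract annihilator-coset representative into the concrete Cauchy transform $C_{G\mu}$ paired via the boundary pairing $\Lambda_g$. Since $\Po^t(\mu)$ is by definition the closed subspace of $L^t(\mu)$ spanned by analytic polynomials, the Hahn--Banach theorem tells us that every bounded linear functional on $\Po^t(\mu)$ arises as $f \mapsto \int_{\cD} f \conj{G}\, d\mu$ for some $G \in L^{t'}(\mu)$, with the norm of the functional equal to the infimum of $\|G\|_{\mu,t'}$ over all representatives $G$ differing by an element annihilating the polynomials. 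The core task is then to show that this integral pairing, when restricted to polynomials $f$, coincides with the boundary-values pairing $\Lambda_g(f)$ for $g = C_{G\mu}$.

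The key computation is to test the pairing on monomials $f(z) = z^k$. First I would verify that for $g = C_{G\mu}$, the Taylor coefficients of $g$ are precisely the moments $\widehat{g}(k) = \int_{\cD} \conj{\zeta}^{\,k} G(\zeta)\, d\mu(\zeta)$, obtained by expanding the Cauchy kernel $(1 - \conj{\zeta} z)^{-1} = \sum_k \conj{\zeta}^{\,k} z^k$ geometrically (valid since $\mu$ is supported in $\cD$ and $z \in \D$). On the other side, for a polynomial $f$, the radial boundary pairing computes as $\lim_{r\to 1}\int_\T f(r\zeta)\conj{g(r\zeta)}\,d\m(\zeta) = \sum_k \widehat{f}(k)\conj{\widehat{g}(k)}$, again using orthogonality of $\{\zeta^k\}$ on $\T$ together with the fact that $g \in \hd = \mathrm{clos}(\ldots)$ ensures the series converge and the radial limits are controlled. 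Matching the two expressions gives $\Lambda_g(f) = \int_{\cD} f\,\conj{G}\,d\mu$ for every polynomial $f$, which simultaneously proves that $\Lambda_g$ extends boundedly (it agrees with a bounded functional on the dense set of polynomials) and that every bounded functional has this form (since every $G \in L^{t'}(\mu)$ produces its associated Cauchy transform $g = C_{G\mu} \in \hd$). The norm identity \eqref{LambdaNorm} then follows because the operator norm of $\Lambda_g$ equals the quotient norm $\inf\{\|G\|_{\mu,t'} : g = C_{G\mu}\}$, two representatives $G_1, G_2$ yielding the same $g$ precisely when $G_1 - G_2$ annihilates all polynomials, i.e. lies in $\Po^t(\mu)^\perp$.

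For the converse direction --- that boundedness of $\Lambda_g$ forces $g$ to be a Cauchy transform of an $L^{t'}(\mu)$ density --- I would argue that if $\Lambda_g$ is bounded on $\Po^t(\mu)$, Hahn--Banach extends it to $L^t(\mu)$, producing some $G \in L^{t'}(\mu)$ representing it; then running the monomial computation in reverse shows $\widehat{g}(k) = \widehat{C_{G\mu}}(k)$ for all $k$, so $g = C_{G\mu}$ by uniqueness of Taylor coefficients for functions in $\hd$.

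\textbf{Main obstacle.} The delicate point is the justification of the interchange of limit and summation in evaluating $\lim_{r\to 1}\int_\T f(r\zeta)\conj{g(r\zeta)}\,d\m(\zeta)$, and more generally confirming that the radial limit defining $\Lambda_g$ exists for $g \in \hd$. Here one must use that $g = C_{G\mu}$ is genuinely a Cauchy transform and invoke either dominated convergence against the polynomial $f$ (whose modulus is bounded on $\cD$) or a uniform estimate on the dilates $g(r\cdot)$; controlling the boundary behaviour of an arbitrary element of $\hd$ is exactly where membership in the closure of Cauchy transforms, rather than mere analyticity, is essential.
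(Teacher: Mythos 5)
Your proposal is correct and follows essentially the same route as the paper: Hahn--Banach together with the $L^t$--$L^{t'}$ duality to represent every functional by some $G \in L^{t'}(\mu)$, and an identification of the integral pairing $\int_{\cD} f\conj{G}\,d\mu$ with the boundary pairing $\Lambda_{C_{G\mu}}(f)$ (you do this by matching Taylor coefficients on monomials, the paper by Fubini and the reproducing property of the Cauchy kernel --- the same computation in different clothing), plus the quotient-norm argument for \eqref{LambdaNorm}. The ``main obstacle'' you flag is in fact vacuous here: since $f$ is a polynomial, $\int_\T f(r\zeta)\conj{g(r\zeta)}\,d\m(\zeta)$ is a \emph{finite} sum $\sum_{k\le \deg f}\widehat f(k)\conj{\widehat g(k)}r^{2k}$ for any $g\in \hd$, so the radial limit exists with no interchange of limit and infinite sum needed.
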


\begin{proof}
First assume that $g$ equals the Cauchy transform $C_{G\mu}$ with $G \in L^{t'}(\mu)$. If $f$ is a polynomial, then in particular $f(rz) \to f(z)$ uniformly on $\cD$ as $r \to 1-$, and from this it follows that
\begin{gather*}
\Lambda_g(f) = \lim_{r \to 1-} \int_\T f(r\zeta)\conj{g(r\zeta)} d\m(\zeta) \\ = \lim_{r \to 1-} \int_\T \int_{\cD} f(r\zeta)\frac{\conj{G(z)}}{1-r\conj{\zeta}z} d\mu(z) d\m(\zeta) \\
= \lim_{r \to 1-} \int_{\cD} \conj{G(z)} \Big( \int_\T \frac{f(r\zeta)}{1- r\conj{\zeta} z} dm(\zeta)\Big) d\mu(z) \\
= \lim_{r \to 1-} \int_{\cD} \conj{G(z)} f(r^2 z) d\mu(z) \\ = \int_{\cD} \conj{G(z)} f(z) d\mu(z). 
\end{gather*}
This shows that any $g= C_{G\mu}$ with $G \in L^{t'}(\mu)$ induces a bounded linear functional $\Lambda_g$, with
\[
\abs{\Lambda_g(f)} \leq \|G\|_{\mu,t'} \|f\|_{\mu,t}.
\]
Conversely, given any bounded linear functional $\Lambda$ on $\Po^t(\mu)$, the Hahn-Banach theorem and the usual duality between $L^t(\mu)$ and $L^{t'}(\mu)$ implies that $\Lambda$ can be represented in the form 
\[ 
\Lambda(f) = \int_{\cD} f(z) \conj{G(z)} d\mu(z)
\] 
for some $G \in L^{t'}(\mu)$. Following the steps backward in the above computation, we easily deduce that 
\begin{align*}
\int_{\cD} f(z) \conj{G(z)} d\mu(z) = \lim_{r \to 1-} \int_\T f(r\zeta) \overline{ \int_{\cD} \frac{G(z)}{1-r\conj{z}\zeta} d\mu(z) } \, d\m(\zeta).
\end{align*} Thus $\Lambda(f) = \Lambda_g(f)$, with $g = C_{G\mu}$. The norm equality in \eqref{LambdaNorm} follows in a standard way. 
\end{proof}

Our assumption (B) on the measure $\mu|\D$ implies that functions with sufficiently smooth extensions to the boundary correspond to bounded linear functionals on $\Po^t(\mu|\D)$ under Cauchy duality. Consequently, they are also bounded on the smaller space $\Po^t(\mu)$ (if the latter is a space of analytic functions). This observation is the content of our next corollary. For $\tau > 0$, let $\hil^\tau$ denote the so-called analytic Sobolev spaces:
\begin{equation} \label{Xbetadef}
\hil^\tau = \{ f \in \hd : \| f\|^2_{\hil^\tau} := \sum_{n=0}^\infty {(n+1)}^\tau |f_n|^2 < \infty \}.
\end{equation}
It is easy to see that given any positive integer $N$, there exists a $\tau > 0$ such that $\hil^\tau$ will consist entirely of functions for which the first $N$ derivatives extend continuously to the boundary of $\D$. 

Recall the quantity $\alpha>-1$ defined in property (B).

\begin{cor} \thlabel{CauchySmoothSolutions} If $\mu$ satisfies the second of the embeddings in (B), then for all $g \in \hil^\tau$ with $\tau = 4 + 2\alpha$, the functional $\Lambda_g$ is bounded on $\Po^t(\mu|\D)$. In particular, for any $g \in \hil^\tau$ there exists a solution $G \in L^{t'}(\mu|\D)$ to the equation $g = C_{G\mu|\D}$.
\end{cor}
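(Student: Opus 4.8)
The plan is to reduce the boundedness of $\Lambda_g$ to the corresponding statement on the standard weighted Bergman space and then feed the result into \thref{CauchyDualChar}. Write $A_\alpha := \Po^t\big((1-|z|)^\alpha dA\big)$. The second embedding in (B) says precisely that the inclusion $\Po^t(\mu|\D) \hookrightarrow A_\alpha$ is bounded, so that $\|f\|_{A_\alpha} \lesssim \|f\|_{\mu|\D, t}$ for every polynomial $f$. Hence it is enough to prove the single estimate $\abs{\Lambda_g(f)} \lesssim \|f\|_{A_\alpha}$; boundedness on the smaller space $\Po^t(\mu|\D)$ then follows immediately. Once $\Lambda_g$ is known to be bounded on $\Po^t(\mu|\D)$, \thref{CauchyDualChar} applied to the finite measure $\mu|\D$ yields a representing density $G \in L^{t'}(\mu|\D)$ with $g = C_{G\mu|\D}$, which is the concluding assertion of the corollary.

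To prove the Bergman estimate I would first make the pairing explicit. For a polynomial $f(z) = \sum_n f_n z^n$ and $g(z) = \sum_n g_n z^n \in \hd$, orthogonality of $\{\zeta^n\}_n$ on $\T$ gives $\int_\T f(r\zeta)\conj{g(r\zeta)}\,d\m(\zeta) = \sum_n f_n \conj{g_n} r^{2n}$, so that $\Lambda_g(f) = \sum_n f_n \conj{g_n}$, a finite sum. I then compare this Cauchy pairing with the weighted Bergman pairing $\langle f, h\rangle_\alpha := \int_\D f\,\conj{h}\,(1-|z|^2)^\alpha\,dA$, noting that $(1-|z|^2)^\alpha$ and $(1-|z|)^\alpha$ are comparable. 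Setting $w_n := \int_\D |z|^{2n}(1-|z|^2)^\alpha\,dA$, a Beta-integral computation together with Stirling's formula gives $w_n \asymp (n+1)^{-(1+\alpha)}$, and $\langle f, h\rangle_\alpha = \sum_n f_n \conj{h_n}\,w_n$. Defining $h$ through $h_n := g_n / w_n$ therefore produces the identity $\Lambda_g(f) = \langle f, h\rangle_\alpha$ for all polynomials $f$.

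The remaining step is to show that $h$ is a bounded (indeed continuous on $\cD$) function whose norm is controlled by $\|g\|_{\hil^\tau}$. Since $\abs{h_n} \asymp \abs{g_n}(n+1)^{1+\alpha}$, the choice $\tau = 4 + 2\alpha$ gives $\sum_n (n+1)^2 \abs{h_n}^2 \asymp \sum_n (n+1)^{4+2\alpha}\abs{g_n}^2 = \|g\|_{\hil^\tau}^2$, so $h \in \hil^2$. As $\sum_n \abs{h_n} \leq \big(\sum_n (n+1)^{-2}\big)^{1/2}\big(\sum_n (n+1)^2\abs{h_n}^2\big)^{1/2} < \infty$, the Taylor series of $h$ converges absolutely on $\cD$ and $\|h\|_\infty \lesssim \|h\|_{\hil^2}\lesssim \|g\|_{\hil^\tau}$. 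Since $(1-|z|^2)^\alpha dA$ is a finite measure (as $\alpha > -1$), Hölder's inequality yields $\int_\D \abs{f}(1-|z|^2)^\alpha dA \lesssim \|f\|_{A_\alpha}$, and hence
\[ \abs{\Lambda_g(f)} = \abs{\langle f, h\rangle_\alpha} \leq \|h\|_\infty \int_\D \abs{f}(1-|z|^2)^\alpha\,dA \lesssim \|g\|_{\hil^\tau}\,\|f\|_{A_\alpha}, \]
which is the required bound.

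The genuinely routine parts are the coefficient asymptotics and the absolute-convergence estimate; the only point demanding care is the exponent bookkeeping. Each weight $w_n$ contributes a factor $(n+1)^{1+\alpha}$ to the coefficients of $h$, so to place $h$ in a space that embeds into $H^\infty$ one needs $\sum_n (n+1)^s\abs{g_n}^2(n+1)^{2(1+\alpha)} < \infty$ for some $s > 1$; the value $\tau = 4 + 2\alpha$ realizes this with $s = 2$ and a comfortable margin. I would also record the harmless endpoint $t = 1$, where $t' = \infty$ and the Hölder step degenerates into the trivial identity $\int_\D \abs{f}(1-|z|^2)^\alpha dA = \|f\|_{A_\alpha}$.
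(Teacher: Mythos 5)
Your argument is correct and is essentially the paper's proof in dual form: both rest on the moment asymptotics $\int_\D |z|^{2n}(1-|z|^2)^\alpha\,dA \asymp (n+1)^{-(1+\alpha)}$, the reverse Carleson embedding from (B), and a Cauchy--Schwarz step with an extra factor $(n+1)^{-2}$ that accounts for the exponent $\tau = 4+2\alpha$. The paper bounds the Taylor coefficients of $f$ directly and pairs them with those of $g$, whereas you transfer $g$ to a bounded representing function $h$ in the weighted Bergman pairing; the content is the same, and both conclude by invoking \thref{CauchyDualChar}.
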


\begin{proof}
If $f \in \Po^t(\mu|\D)$, then a straightforward computation gives the following estimate for the Taylor coefficients of $f$:
\begin{gather*}
\Big| \frac{f_n}{(n+1)^{1+\alpha}} \Big| \simeq \Big| \int_\D f\conj{z^n}(1-|z|^2)^{\alpha} dA(z) \Big|
 \leq C \|f\|_{\mu|\D,t},
\end{gather*} where $C > 0$ is independent of $f$ and $n$. In the last step we used H\"older's inequality and the reverse Carleson measure assumption $\Po^{t}(\mu|\D) \hookrightarrow \Po^{t}\left( (1-|z|)^{\alpha} dA\right)$ of (B). It follows that 
\[
\frac{|f_n|^2}{(n+1)^{2+2\alpha}} \leq C^2 \|f\|^2_{\mu|\D,t}.
\] 
In particular, this implies 
\[
\sum_{n=0}^\infty \frac{|f_n|^2}{(n+1)^{4+2\alpha}} \lesssim \|f\|^2_{\mu|\D,t}. 
\] 
If $\tau = 4 + 2\alpha$ and $g \in \hil^\tau$, then it follows from Cauchy-Schwarz inequality and the above computation that 
\[ \sum_{n=0}^\infty |f_n\conj{g_n}| \leq  \|g\|_{\hil^\tau} \|f\|_{\mu|\D,t}.
\]  
Consequently, the limit in \[ \Lambda_g(f) = \lim_{r \to 1-} \sum_{n=0}^\infty  f_n \conj{g_n}r^{2n} = \lim_{r \to 1-} \int_\T f(r\zeta)\conj{g(r\zeta)} d\m(\zeta) \] exists and moreover we have $|\Lambda_g(f)| \leq \|g\|_{\hil^\tau} \|f\|_{\mu|\D,t}.$ The existence of a solution $G$ of the equation $g = C_{G\mu|\D}$ is now an immediate consequence of \thref{CauchyDualChar}. 
\end{proof}

\section{Permanence of inner factors under norm convergence} 
\label{permanencesec}

In this section, we shall use results of Section \ref{Cauchydualsec} to establish the property (P) for the situation described in \thref{InnerPermanenceTheorem} of Section \ref{introsec}. This claim will be a consequence of the two propositions which are presented below. Here, we require that $t \in [1,\infty)$.

\begin{prop} \thlabel{BCpermanence}
Assume that the measure $\mu|\D$ satisfies (B) and $\theta =  BS_\nu$, where $B$ is a Blaschke product and $\nu$ is a Beurling-Carleson singular measure, that is, $\nu$ is supported on a countable union of Beurling-Carleson sets of Lebesgue measure zero. If $\theta h_n \to f$ in the norm of $\Po^t(\mu|\D)$, where $h_n$ are bounded analytic functions and $f \in \N^+$, then $f/\theta \in \N^+$.
\end{prop}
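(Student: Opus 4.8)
The plan is to combine the Cauchy duality of Section \ref{Cauchydualsec} with the density of smooth functions in the model space $K_\theta$, which is available precisely because $\theta = BS_\nu$ is a Beurling-Carleson inner function. The first move is a reduction from the Smirnov-class setting to an $H^2$ setting, which is what sidesteps the fact that a general $\N^+$ function need not have integrable boundary values. Writing $f = u/v$ with $u,v \in H^\infty$ and $v$ outer, I would multiply the hypothesis $\theta h_n \to f$ by $v$. Since multiplication by a bounded analytic function is a bounded operator on $\Po^t(\mu|\D)$ (because $|v| \leq \|v\|_\infty$ on $\D$ and $\mu|\D$ is carried by $\D$), this yields $\theta(h_n v) \to fv = u$ in $\Po^t(\mu|\D)$, where now $u \in H^\infty \subseteq H^2$ and the $h_n v$ are again bounded analytic. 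Because $v$ is outer, the conclusion $f/\theta \in \N^+$ is equivalent to $u/\theta \in \N^+$, so it suffices to prove that the inner factor $\theta$ divides $u$.

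Next I would run the duality argument for $u$. Fix $\tau = 4 + 2\alpha$. By \thref{CauchySmoothSolutions}, every $g \in \hil^\tau$ induces a bounded functional $\Lambda_g$ on $\Po^t(\mu|\D)$, and from the definition of $\Lambda_g$ together with \thref{CauchyDualChar} one checks that on $H^2$-functions $\Lambda_g$ coincides with the $H^2$ coefficient pairing $\langle \cdot, g \rangle_{H^2}$. For any $g \in \hil^\tau \cap K_\theta$ and any bounded analytic $h$, the function $\theta h$ lies in $\theta H^2$, so $\Lambda_g(\theta h) = \langle \theta h, g \rangle_{H^2} = 0$. Applying this to $\theta(h_n v)$ and passing to the limit using continuity of $\Lambda_g$ and the convergence $\theta(h_n v) \to u$, I obtain $\langle u, g \rangle_{H^2} = \Lambda_g(u) = 0$ for every $g \in \hil^\tau \cap K_\theta$.

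It remains to upgrade this to orthogonality against all of $K_\theta$ and conclude. Here I invoke the density result of \cite{smoothdensektheta}: since $\nu$ is supported on a countable union of Beurling-Carleson sets of measure zero, the space $\hil^\tau \cap K_\theta$ is dense in $K_\theta$ in the $H^2$-norm. As $u \in H^2$, the functional $\langle u, \cdot \rangle_{H^2}$ is $H^2$-continuous on $K_\theta$, so the vanishing extends from the dense set to all of $K_\theta$; that is, $u \perp K_\theta$, whence $u \in \theta H^2$. Therefore $u/\theta \in H^2 \subseteq \N^+$, and by the reduction $f/\theta \in \N^+$, as desired. The main obstacle is exactly this passage from smooth test functions to the whole of $K_\theta$: the orthogonality relation is initially known only against the regular functionals $\Lambda_g$, and it is the density of $\hil^\tau \cap K_\theta$ in $K_\theta$ — the deep input of \cite{smoothdensektheta} and the place where the Beurling-Carleson hypothesis on $\nu$ is essential — that makes the argument close. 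The remaining points, namely boundedness of the multiplier $M_v$ and the identification of $\Lambda_g$ with the $H^2$-pairing on $H^\infty$, are routine consequences of assumption (B) and \thref{CauchyDualChar}.
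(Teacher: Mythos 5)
Your proposal is correct and follows essentially the same route as the paper's own proof: factor $f = u/v$ with $v$ outer and multiply through by $v$, test against regular functions in $K_\theta$ using the boundedness of $\Lambda_g$ from \thref{CauchySmoothSolutions}, pass to the limit, and invoke the density result of \cite{smoothdensektheta} to conclude $u \in \theta H^2$. The only cosmetic difference is that you work with $\hil^\tau \cap K_\theta$ where the paper uses $\A^\infty \cap K_\theta$; since $\A^\infty \subseteq \hil^\tau$, the cited density statement covers your version as well.
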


\begin{proof}
Let $f = u/v$ be a factorization of $f$ into $u,v \in H^\infty$ with $v$ being outer. By multiplying the sequence by the bounded function $v$, we get that $\theta p_n v \to u$ in $\Po^t(\mu|\D)$. Let $\A^\infty$ denote the algebra of analytic functions in $\D$ with smooth extensions to the boundary $\T$. For any $s \in \A^\infty \cap K_\theta$, we have
 \[ 
 \int_\T u\conj{s} \, d\m = \lim_{n \to \infty} \int_\T \theta p_n v \conj{s} \,d\m =0.
 \] 
Indeed, the passage to the limit is justified by the fact that $s \in \A^\infty$ and \thref{CauchySmoothSolutions} and the integrals in the limit vanish because $s \in K_\theta = (\theta H^2)^\perp$. It was shown in \cite{smoothdensektheta} that $\A^\infty \cap K_\theta$ is dense in $K_\theta$ (if $\theta$ is a Beurling-Carleson inner function), hence we conclude that $u = \theta u_0$, with $u_0 \in H^\infty$, and consequently $f = \theta (u_0 / v )\in \theta \N^+$.
\end{proof}

The following lemma isolates a technical argument needed in the proof of \thref{Kpermanence} below. The proof of this lemma is similar to Carleson's construction in \cite{carlesonuniqueness}, but here we instead deal with Beurling-Carleson sets of positive measure. 

\begin{lemma} \thlabel{smoothcutofflemma}
Let $E$ be a Beurling-Carleson set of positive measure and $\{A_k\}$ be the arcs complementary to $E$ on $\T$, where $A_k$ is extending between points $a_k$ and $b_k$ on $\T$. Define the function 
\[ 
	\phi(\zeta) = 
	\begin{cases} 
      |(\zeta - a_k)(\zeta - b_k)|, & \hspace{5mm} \zeta \in A_k \\
      1, & \hspace{5mm} \zeta \in E.
   \end{cases}
\]
Then $\log(\phi) \in L^1(\T)$. Let $N$ be some very large positive integer and $s$ be the outer function with modulus $|s| = |\phi|^N$ on $\T$. Let $g_1, g_2$ be any two functions analytic in $\D$ which are of the form 
\[ g_l(z) = \exp \Big( - \int_E \frac{\zeta + z}{\zeta - z} d\nu_l(\zeta) \Big), \quad z \in \D, \qquad  l=1,2,
\] 
where $\nu_1, \nu_2$ are positive finite Borel measures, both concentrated on the set $E$. If $1_{\T \setminus E}$ denotes the indicator function of the set $\T \setminus E$, then the $2\pi$-periodic function $F$ defined by 
\[ F(t) = \conj{g_1(e^{it})}g_2(e^{it})s(e^{it})1_{\T \setminus E}(e^{it})
\] is in $\mathbb{C}^n(\mathbb{R})$, as long as $N = N(n)$ is chosen large enough. 
\end{lemma}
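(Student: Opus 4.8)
The plan is to show that $F$ has $n$ continuous derivatives by establishing that the singularities of $F$ occur only at the endpoints $a_k, b_k$ of the complementary arcs, and that the outer factor $s$ vanishes there to a high enough order to smooth out any blowup coming from $g_1, g_2$. First I would note that on the open arcs $A_k \subset \T \setminus E$, all three factors $\overline{g_1}, g_2, s$ are smooth, being restrictions to $\T$ of functions analytic in $\D$ and across the arc (since $\nu_1, \nu_2$ are concentrated on $E$, the functions $g_l$ extend analytically across each $A_k$, and $s$ is outer with modulus $|\phi|^N$ which is real-analytic and nonvanishing on the interior of each $A_k$). On $E$ the indicator $1_{\T\setminus E}$ forces $F \equiv 0$. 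Thus $F$ is automatically smooth away from the endpoints, and the entire difficulty concentrates at the points $a_k, b_k$, where $F$ must glue continuously (together with its first $n$ derivatives) to the zero function coming from the $E$ side.

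The key estimate is therefore to control the growth of $\overline{g_1} g_2$ near an endpoint, say $a_k$, and compare it against the decay of $s$. Near $a_k$, the worst-case growth of $|g_l(e^{it})|$ is governed by how close $\nu_l$ places mass to $a_k$; in the extreme case of a point mass at $a_k$, $g_l$ behaves like a singular inner function whose modulus and derivatives blow up at a controlled exponential/polynomial rate as $e^{it} \to a_k$. Meanwhile $|s(e^{it})| = |\phi(e^{it})|^N \approx |e^{it} - a_k|^N$ near $a_k$, so $s$ and its derivatives vanish there to order $N$. The plan is to show, via Leibniz's rule, that each derivative $F^{(j)}(t)$ for $j \le n$ is a finite sum of products of derivatives of $\overline{g_1}$, $g_2$, $s$, and the (smooth, bounded) indicator-times-cutoff structure, and that every such term is dominated near $a_k$ by a bound of the form $C |e^{it}-a_k|^{N - M(n)}$ for some fixed $M(n)$ depending only on $n$ (and on the growth of the $g_l$). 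Choosing $N = N(n)$ so that $N - M(n) \ge 1$ ensures each term tends to $0$ as $e^{it} \to a_k$, which gives the continuous matching with the zero values on $E$, and hence $F \in C^n(\mathbb{R})$.

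The main obstacle is making the endpoint growth estimate for $g_1, g_2$ uniform and quantitative across all complementary arcs $A_k$ simultaneously, since there are infinitely many of them and the measures $\nu_l$ may distribute mass near many endpoints at once. I would handle this by extracting a pointwise bound of the form $|g_l^{(j)}(e^{it})| \lesssim |e^{it} - a_k|^{-c(j)}$ valid for $e^{it} \in A_k$ near $a_k$, where the exponent $c(j)$ depends only on the derivative order $j$ and not on $k$ — this follows from differentiating the exponential representation and using that the Poisson-type kernel $\tfrac{\zeta + z}{\zeta - z}$ and its derivatives against a finite measure $\nu_l$ concentrated on $E$ blow up only polynomially in the distance to the nearest point of $E$. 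Since $\overline{g_1} g_2$ has modulus at most $1$ away from $E$ (both $g_l$ being contractive-type exponentials with $\Re$ of the exponent nonpositive) but its derivatives can grow, the quantitative derivative bounds are what matter, and the uniformity in $k$ is exactly what lets a single choice of $N = N(n)$ work for all arcs.

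Finally I would assemble these pieces: the Beurling-Carleson hypothesis on $E$ guarantees $\log \phi \in L^1(\T)$ (so that the outer function $s$ genuinely exists), which is the first claim to verify; then the smoothness of $F$ on the interior of each $A_k$ and its vanishing on $E$ are immediate; and the endpoint matching reduces, via the Leibniz expansion and the uniform derivative bounds above, to the single inequality $N(n) > M(n)$. Taking $N$ large enough relative to $n$ then yields $F \in C^n(\mathbb{R})$, completing the proof.
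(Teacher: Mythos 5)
Your proposal follows essentially the same route as the paper: analyticity of $\overline{g_1}g_2s$ across each complementary arc, polynomial blow-up of the derivatives of $g_l$ in terms of the distance to $E$ uniformly in $k$, high-order vanishing of $s$ at the endpoints, a Leibniz expansion, and a difference-quotient matching with the zero values on $E$. The only step you assert rather than justify is that the \emph{derivatives} of the outer function $s$ also vanish to high order at $a_k,b_k$ (boundary modulus bounds alone do not give this); the paper obtains it from the local factorization $s(z)=(1-z/a_k)^N(1-z/b_k)^N\exp(Nh_k(z))$ with $|h_k^{(j)}(z)|\lesssim \dist{z}{E}^{-j-1}$, and note also that the gluing must be verified at every point of $E$ (including accumulation points of infinitely many arcs), which your uniform-in-$k$ bounds in terms of $\dist{e^{it}}{E}$ do cover.
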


\begin{proof}
Ideas here will be borrowed from the elegant proof of Carleson's result cited above that appears in \cite[Theorem 4.4.3]{dirichletspaceprimer}. It follows easily from the Beurling-Carleson condition on $E$ that $\log(\phi) \in L^1(\T)$. For any fixed $k$, the function $(1-z/a_k)^N(1-z/b_k)^N$ is outer, and therefore we can reproduce it using the formula \[(1-z/a_k)^N(1-z/b_k)^N = \exp \Big( N \int_\T \frac{\zeta + z}{\zeta - z} \log(|(\zeta - a_k)(\zeta - b_k)| d\m(\zeta) \Big) .\] The function $s$ is the outer function with $|s| = |\phi|^N$ on $\T$, thus \[ s(z) = \exp \Big(N\int_\T \frac{\zeta + z}{\zeta - z} \log(\phi(\zeta)) d\m(\zeta) \Big). \] Note that the moduli of the two above functions coincide on $A_k$. Consequently \[s(z) = (1-z/a_k)^N(1-z/b_k)^N\exp \Big(N h_k(z) \Big) \] where \begin{equation} \label{hk} h_k(z) = \int_{\T \setminus A_k} \frac{\zeta + z}{\zeta - z} \log\Big(\frac{\phi(\zeta)}{|(\zeta - a_k)(\zeta - b_k)|} \Big) d\m(\zeta). \end{equation} This formula shows that $s$ extends analytically across each of the arcs $A_k$ (because $h_k$ does). 

Now let $z \in A_k$ and denote by $\dist{z}{E} = \min( |z - a_k|, |z - b_k|)$ the distance from $z$ to the closed Beurling-Carleson set $E$. We have that $s$ is analytic in a neighbourhood of $z$, and if (without loss of generality) we assume that $z$ is closer to $a_k$ than it is to $b_k$, then 
\begin{gather}
\label{jAkest} |s(z)| =  |(z - a_k)(z-b_k)|^N  \nonumber \leq \dist{z}{E}^N 2^N. 
\end{gather}
We will now estimate the derivatives. It is clear from the formula \eqref{hk} that 
\[ 
|h^{(j)}_k(z)| \leq C \dist{z}{E}^{-j-1}, \quad z \in A_k 
\] 
where the constant $C$ depends on $j$ but not on $k$. The formulas for $g_1$ and $g_2$ easily imply that these functions extend analytically across each arc $A_k$, and their modulus equals 1 on each such arc. Moreover, it is again easy to see that \[ |g_l^{(j)}(z)| \leq C\dist{z}{E}^{-j-1}, \qquad l=1,2,\] with constant $C$ depending on number of derivatives $j$ taken. By the product rule, it follows from these observations that \[  \Big| \Big(\frac{d}{dt}\Big)^j (\conj{g_1}g_2s)(e^{it}) \Big| \leq C_j \dist{e^{it}}{E}^{N'}, \quad e^{it} \in A_k  \] for some positive integer $N'$ and constant $C_j$ depending only on $j$, but not on $k$, as long as $N$ is much larger than $j$. Note that we have used the fact that $|\frac{d}{dt}g(e^{it})| = |g'(e^{it})|$ if $g$ is analytic. We have thus verified that any finite number of derivatives of $\conj{g_1}g_2s$ on $\T \setminus E$ tend to zero rapidly as $z \in \T \setminus E$ approaches $E$, as long as the integer $N$ is fixed large enough. It remains now to show that the derivatives of $\conj{g_1}g_2s1_{\T \setminus E}$ on $E$ are zero. If $w_0 = e^{it_0}$ belongs to $E$ and $z = e^{it}$, then for the first derivative we have 
\begin{gather*}
 \lim_{ t \to t_0} \Big|\frac{\conj{g_1(e^{it})}g_2(e^{it})s(e^{it})1_{\T \setminus E}(e^{it})}{t-t_0} \Big|  \leq \limsup_{t \to t_0} \frac{C \dist{e^{it}}{E}^{N'}}{\abs{t-t_0}} \\ \lesssim \limsup_{t \to t_0} C \dist{e^{it}}{E})^{N'-1} = 0. 
\end{gather*} 
Higher order derivatives are handled in the exact same way. The proof is complete.

\end{proof}

\begin{remark}
Using more refined techniques from \cite{taylor1970ideals} or from Lemma 7.11 in \cite{hedenmalmbergmanspaces}, one can actually construct a smooth $C^\infty$-function. However, the current form of \thref{smoothcutofflemma} is enough for our purposes.
\end{remark}

The next result extends \thref{BCpermanence} to certain measures for which the Korenblum-Roberts part does not vanish.

\begin{prop} \thlabel{Kpermanence}
Let $\mu|\T = \omega d\m$ be a measure satisfying condition (A) and is concentrated on a single Beurling-Carleson set $E$ of positive Lebesgue measure, and let $\mu|\D$ be a measure satisfying the condition (B). Suppose $\theta$ is an inner function with corresponding singular measure which is supported on $E$. If $\theta p_n \to f$ in the norm of $\Po^t(\mu)$, where $p_n$ are bounded analytic functions and $f \in \N^+$, then $f/\theta \in \N^+$.
\end{prop}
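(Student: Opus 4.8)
The goal of Proposition \ref{Kpermanence} is to upgrade the Beurling-Carleson permanence result of Proposition \ref{BCpermanence} to an inner function $\theta$ whose singular measure is supported on the single Beurling-Carleson set $E$ of \emph{positive} measure — so now the measure may contain a Korenblum-Roberts part, which in the Bergman setting would normally destroy permanence. The extra ingredient making this possible is precisely assumption (A): the weight $\omega$ is log-integrable on $E$, so the Hardy-like behavior of $\Po^t(\mu)$ near $E$ survives.

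**My plan** follows the template of Proposition \ref{BCpermanence} but now tests against a cleverly regularized function built from Lemma \ref{smoothcutofflemma}, rather than against a smooth element of $K_\theta$. Let me sketch the approach.

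=== BEGIN PROOF PROPOSAL ===

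\begin{proof}[Proof sketch]
The plan is to mimic the duality argument of \thref{BCpermanence}, but to confront the difficulty that the abstract density of smooth functions in $K_\theta$ is no longer available once the singular measure of $\theta$ acquires a Korenblum-Roberts part living on $E$. First I would factor $f = u/v$ with $u, v \in H^\infty$ and $v$ outer, and multiply the approximating sequence by $v$ to reduce to the situation $\theta p_n v \to u$ in the norm of $\Po^t(\mu)$, where now $u \in H^\infty$. The objective is to produce a bounded analytic $u_0$ with $u = \theta u_0$; equivalently, I want to verify that $u$ annihilates, in an appropriate pairing, enough of the reproducing structure of $\theta H^2$ to force divisibility by $\theta$.

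Next I would split $\theta = S_1 S_2$, where $S_1$ collects the Blaschke and Beurling-Carleson factors (handled already by \thref{BCpermanence}) and $S_2 = S_{\nu_\K|E}$ is the Korenblum-Roberts singular factor supported on $E$. The heart of the matter is showing permanence of $S_2$, and this is where \thref{smoothcutofflemma} enters. The key idea is to test the convergence $\theta p_n v \to u$ against functions of the form $\conj{\theta} G$, where $G$ is chosen so that the product lands in a smoothness class $\h^\tau$ on which \thref{CauchySmoothSolutions} guarantees a bounded Cauchy-dual functional on $\Po^t(\mu|\D)$, while the weight $\omega$ controls the boundary contribution on $E$. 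Concretely, taking $g_1, g_2$ to be singular inner functions with measures concentrated on $E$ and multiplying by the outer cutoff $s$ with $|s| = |\phi|^N$ from \thref{smoothcutofflemma}, I obtain a function $\conj{g_1} g_2 s \, 1_{\T \setminus E}$ that is $C^n$-smooth across the complementary arcs and vanishes to high order on $E$. Pairing $\theta p_n v$ against such a test function and passing to the limit, the integral over $\T \setminus E$ is controlled by the smoothness (hence the Cauchy-dual boundedness on $\Po^t(\mu|\D)$), while the integral over $E$ is controlled using the log-integrability of $\omega$ from (A), exactly as in \thref{hruscevKOtheorem}. The cancellation $\conj{\theta}\theta = 1$ on $\T$ then forces the singular factor $S_2$ to divide $u$.

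The main obstacle I anticipate is the second step: arranging the test functions so that they simultaneously (i) belong to a high-enough smoothness class $\h^\tau$ for \thref{CauchySmoothSolutions} to apply on the disk part $\mu|\D$, and (ii) respect the singular structure of $S_2$ on the boundary set $E$ so that the limiting pairing actually detects divisibility by $S_2$ and not merely by $S_1$. The delicate point is that the Korenblum-Roberts factor $S_2$ has no smoothness to exploit on $E$ itself, so I must route all the regularity through the complementary arcs $A_k$, using the rapid vanishing from \thref{smoothcutofflemma} to absorb the singularities of $\conj{g_1}$ and $g_2$ near the endpoints $a_k, b_k$. Once the test functions are correctly assembled, combining the disk estimate from \thref{CauchySmoothSolutions} with the boundary estimate powered by assumption (A) should close the argument and yield $f/\theta = (u_0/v) \in \N^+$.
\end{proof}

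=== END PROOF PROPOSAL ===
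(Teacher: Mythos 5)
Your overall strategy coincides with the paper's: reduce to $u\in H^\infty$ via $f=u/v$, build test functions in $K_\theta$ from the cutoff of \thref{smoothcutofflemma}, split the boundary integral into the pieces over $E$ and $\T\setminus E$, control the latter through \thref{CauchySmoothSolutions} and the former through assumption (A), and conclude $u\in\theta H^2$. (The splitting $\theta=S_1S_2$ you propose is not needed; the paper runs the argument for the whole $\theta$ at once.) However, your sketch leaves two genuine gaps at exactly the points where the proof has to close.

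First, the totality of the test family. You write that pairing $u$ against the test functions ``forces the singular factor $S_2$ to divide $u$,'' but annihilating \emph{some} elements of $K_\theta$ proves nothing; you must annihilate a dense subset. The paper secures this by taking the test functions to be $F_p=P_+\bigl(\theta\,\conj{\zeta p s H}\bigr)$ with $p$ ranging over all analytic polynomials, and by arranging that the auxiliary factor $sH$ is \emph{outer}: then $\{\zeta p s H\}$ spans a dense subspace of $zH^2$, hence $\{F_p\}$ is dense in $K_\theta$, and $\int_\T u\conj{F_p}\,d\m=0$ for all $p$ really does give $u\in (K_\theta)^\perp\cap H^2=\theta H^2$. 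If your regularizing factors carried any inner part, the argument would only yield divisibility by a smaller inner function. This density step is absent from your sketch and is the crux of the proof.

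Second, the mechanism on $E$ is not the one you cite. You propose to control the integral over $E$ ``exactly as in \thref{hruscevKOtheorem}'' (harmonic measure and Khinchin--Ostrowski), but that argument plays no role here. What the paper does is introduce the outer function $H$ with $|H|=\min(1,\omega)$ on $E$ and $|H|=1$ off $E$ --- this is precisely where the log-integrability in (A) is consumed, to guarantee $H$ exists --- so that $U:=H/\omega\in L^\infty(E)$ and the $E$-part of $F_p$ becomes the Cauchy transform of $\theta\conj{\zeta p s U}\,\omega\in L^{t'}(\mu|\T)$; boundedness of the functional on $\Po^t(\mu)$ then follows from \thref{CauchyDualChar}, with \thref{CauchySmoothSolutions} handling only the $\T\setminus E$ piece. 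Note also that $H$ does double duty: being of the form $\exp\bigl(-\int_E\frac{\zeta+z}{\zeta-z}\,d\nu\bigr)$ with $d\nu=-\log\min(1,\omega)\,d\m|_E\geq 0$, it is exactly one of the functions $g_l$ admitted by \thref{smoothcutofflemma}, which is what makes $\theta\conj{\zeta p s H}1_{\T\setminus E}$ smooth. Without identifying this single object $H$ that simultaneously (i) is outer, (ii) is dominated by $\omega$ on $E$, and (iii) fits the hypotheses of the cutoff lemma, the two halves of your estimate do not assemble into a bounded functional on $\Po^t(\mu)$.
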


\begin{proof} Let $H$ be the outer function with boundary values which satisfy $|H| = \min(1,\omega)$ on $E$ and $|H| = 1$ on $\T \setminus E$. We note that $H$ extends analytically across $\T \setminus E$. Let $s$ be a bounded outer function, $p$ be an analytic polynomial, and consider the family of functions 
\[ 
F_p(z) = \int_{\T} \frac{\theta(\zeta)\conj{p(\zeta)s(\zeta) \zeta H(\zeta)}}{1-\conj{\zeta}z} d\m(\zeta).
\] 
The function $F_p$ is clearly the orthogonal projection of the bounded function $\theta(\zeta)\conj{p(\zeta)s(\zeta) \zeta H(\zeta)} \in L^2(d\m)$ to the Hardy space $H^2$. These functions belong to $K_\theta$. To see this, let $\ip{\cdot}{\cdot}_{L^2}$ denote the inner product on $L^2$ and let $P_+: L^2(\T) \to H^2$ be the orthogonal projection. Then, if $q$ is any analytic polynomial, we have 
\[ \ip{\theta q}{F_p}_{L^2} = \ip{\theta q}{\theta \conj{\zeta p s H}}_{L^2} = \ip{q}{\conj{\zeta p s H}}_{L^2} = 0. \] Thus $F_p \in (\theta H^2)^\perp = K_\theta$. Moreover, the functions $F_p$ form a dense subset of $K_\theta$. Indeed, assume that $f \in K_\theta$ is orthogonal to $F_p$ for all analytic polynomials $p$. Recall that $f = \theta \conj{f_0}$ on $\T$ for some $f_0 \in H^2$ with $f_0(0) = 0$. We obtain 
\[ 0 = \ip{f}{F_p}_{L^2} = \ip{\theta \conj{f_0}}{ \theta \conj{\zeta p s H}}_{L^2} = \ip{\conj{f_0}}{\conj{\zeta p sH}}_{L^2}.\] Since $sH$ is outer, it follows that $\zeta p s H$ is dense in the set of functions in $H^2$ which vanish at $0$, and so $f_0 \equiv 0$. Consequently, $f = 0$ and the functions $F_p$ are dense in $K_\theta$.

We now decompose $F_p$ as 
\begin{gather} F_p(z) = \int_{E} \frac{ \theta(\zeta)\conj{ \zeta p(\zeta) s(\zeta) H(\zeta)}}{1-\conj{\zeta}z} d\m(\zeta) \nonumber \\ + \int_{\T \setminus E} \frac{\theta(\zeta)\conj{\zeta p(\zeta) s(\zeta) H(\zeta)}}{1-\conj{\zeta}z} d\m(\zeta). \label{2ndintegral}
\end{gather} 
Note that on $E$ we have that $|H| \leq \omega$, so that $H/\omega := U \in L^\infty(E)$. Thus the first integral in the sum above can be re-written as
\[ \int_{E} \frac{ \theta(\zeta)\conj{\zeta p(\zeta) s(\zeta) U(\zeta)}}{1-\conj{\zeta}z} \omega(\zeta) d\m(\zeta). \] This is a Cauchy transform of a function in $L^{t'}(\mu|\T)$. For the second of the integrals, the choice of $s = s(N)$ as in \thref{smoothcutofflemma} implies that \[ \zeta \mapsto \theta(\zeta)\conj{\zeta p(\zeta)s(\zeta) H(\zeta)} 1_{\T \setminus E}(\zeta)\] belongs to $C^n(\T)$, if only $N = N(n)$ is chosen large enough. For an appropriate choice of $n$, the Fourier coefficients of the function will decay rapidly enough so that the integral \eqref{2ndintegral} will define a function in $\hil^{\tau}$ (see \eqref{Xbetadef} to recall the definition), for any fixed choice of $\tau > 0$. These choices will ensure that the second integral is a Cauchy transform of a function in $L^{t'}(\mu|\D)$ by \thref{CauchySmoothSolutions}, and consequently by  \thref{CauchyDualChar} the functionals $\Lambda_{F_p}$ are bounded on $\Po^t(\mu)$. 

Now assume that $\|\theta p_n - f\|_{\mu,t} \to 0$ and $f \in \N^+$. Then there exists a factorization $f = u/v$, where $u,v\in H^\infty$. It follows that $\|\theta v p_n - u\|_{\mu,t} \to 0$. The functionals $\Lambda_{F_p}$ are bounded on $\Po^t(\mu)$ and $F_p \in K_\theta$, so
\begin{align*}
0 = \lim_{n \to \infty} \int_{\T} \theta v p_n \conj{F_p} d\m = \lim_{n \to \infty} \Lambda_{F_p}(\theta v p_n) = \Lambda_{F_p} (u) = \int_\T u \conj{F_p} d\m.
\end{align*} By density of the functions $F_p$ inside $K_\theta$, we deduce that $u \in (K_\theta)^\perp = \theta H^2$, and consequently $f/\theta \in \N^+$. 
\end{proof}

It follows easily from \thref{Kpermanence} that the permanence property holds for general measure $\mu|\T$ concentrated on a countable union $\cup_k E_k$ of Beurling-Carleson sets $E_k$ of positive measure, and $\theta$ being an inner function with corresponding singular measure concentrated on $\cup_k E_k$. Indeed, if $\theta = BS_\nu$, then we simply apply the above proposition to $\theta_{E_k} = BS_{\nu|E_k}$, where $\nu|E_k$ is the restriction of $\nu$ to the set $E_k$, for each $k$. This observation completes the proof of \thref{InnerPermanenceTheorem} of Section \ref{introsec}.

%

\section{Cyclic inner functions in $\Po^t(\mu)$}

\label{cyclicsection}

In this final section, we will establish our results on cyclicity. Here we may assume that $t \in (0,\infty)$. The notation $\|f\|_{\mu,t}$ should in case $t < 1$ be interpreted in the usual way which makes it into a translation-invariant metric on $\Po^t(\mu)$, thus for any $t>0$, we have
\[ \norm{f}_{\mu,t} = \left( \int_{\cD} \abs{f(z)}^t d\mu(z) \right)^{ \min \{ 1, 1/t\}}.
\] 
The relevant assumptions on $\mu$ that we shall need in this section are that $\mathcal{P}^t(\mu)$ can be identified as a space of analytic functions and that condition (B) holds. In fact, we may actually relax condition (B) slightly, by requiring that the measure $\mu|\D$ is such that the norms of monomials do not decrease too slowly: There exists a constant $\beta >0$, such that
\begin{enumerate}
\item[(B')]
\begin{equation*}\label{condB'}
\norm{z^n}_{\mu|\D,t} \lesssim n^{-\beta}
\end{equation*}
\end{enumerate}
for all positive integers $n$. Our main task in this section is to prove \thref{cyclicitytheorem}.
The idea behind the proof is very much inspired by the beautiful techniques of Roberts \cite{roberts1985cyclic}, which establishes cyclicity of Korenblum-Roberts inner functions in the case of Bergman-type spaces. However, due to the fact that our measure $\mu$ lives on $\T$, these techniques require certain adaptations to our setting. In this section, we shall denote by $\Po$ the set of analytic polynomials and fix $\rho$ to be the distance between the closed sets $\supp{\nu}$ and $F := \supp{\mu|\T}$, where $\nu$ is a Korenblum-Roberts measure concentrated on the complement of $F$. The following observation will allow us to assume that $\rho > 0$.

\begin{lemma} \thlabel{lemreduc}
Let $S_\nu$ be a singular inner function as in the statement of \thref{cyclicitytheorem}. Let $\nu_n$ be the restriction of $\nu$ to the set $\{ \zeta \in \T : \text{dist}(\zeta, F) > 1/n \}$ and $S_{\nu_n}$ be the corresponding inner function. If $S_{\nu_n}$ is a cyclic vector in $\Po^t(\mu)$ for all sufficiently large $n \geq 1$, then so is $S_\nu$. 
\end{lemma}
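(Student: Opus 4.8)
The plan is to show that the cyclicity of each $S_{\nu_n}$ forces the constant function $1$ to lie in $[S_\nu]$, the smallest closed $M_z$-invariant subspace of $\Po^t(\mu)$ containing $S_\nu$. Since the polynomials are dense in $\Po^t(\mu)$ and $[S_\nu]$ is closed and $M_z$-invariant, membership $1 \in [S_\nu]$ immediately gives $z^k \in [S_\nu]$ for all $k$, hence $[S_\nu] = \Po^t(\mu)$, i.e. $S_\nu$ is cyclic. Write $\sigma_n := \nu - \nu_n$, so $\sigma_n$ is the restriction of $\nu$ to $R_n := \{ \zeta \in \T : 0 < \dist{\zeta}{F} \leq 1/n \}$ and $S_\nu = S_{\nu_n} S_{\sigma_n}$. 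Because the sets $R_n$ decrease to the empty set and $\nu$ is finite, the masses $\|\sigma_n\| = \nu(R_n)$ tend to $0$.

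First I would record that $S_{\sigma_n} \in [S_\nu]$ for every $n$. By the assumed cyclicity of $S_{\nu_n}$ there are polynomials $q_{n,k}$ with $\|q_{n,k} S_{\nu_n} - 1\|_{\mu,t} \to 0$ as $k \to \infty$. Multiplying by $S_{\sigma_n}$, which satisfies $|S_{\sigma_n}| \leq 1$ on $\cD$, preserves convergence for every $t > 0$, so $q_{n,k} S_\nu = q_{n,k} S_{\nu_n} S_{\sigma_n} \to S_{\sigma_n}$ in $\Po^t(\mu)$; since each $q_{n,k} S_\nu \in [S_\nu]$ and this subspace is closed, we obtain $S_{\sigma_n} \in [S_\nu]$.

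It then remains to prove that $S_{\sigma_n} \to 1$ in the norm of $\Po^t(\mu)$, as this places $1 \in [S_\nu]$. On the disk part, the elementary bound $\big| \int_\T \tfrac{\zeta+z}{\zeta-z}\, d\sigma_n(\zeta) \big| \leq \tfrac{1+|z|}{1-|z|}\,\|\sigma_n\|$ shows that $S_{\sigma_n} \to 1$ uniformly on compact subsets of $\D$; since $|S_{\sigma_n} - 1| \leq 2$ and $\mu|\D$ is finite, dominated convergence gives $\int_\D |S_{\sigma_n} - 1|^t \, d\mu|\D \to 0$. The delicate part is the boundary term $\int_F |S_{\sigma_n} - 1|^t \, \omega\, d\m$, and this I expect to be the main obstacle, since $S_{\sigma_n}$ is inner and hence has unimodular boundary values, so no pointwise decay is visible directly. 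The device I would use exploits precisely that $S_{\sigma_n}$ is inner: because $\int_\T S_{\sigma_n}\, d\m = S_{\sigma_n}(0) = \exp(-\|\sigma_n\|)$ and $|S_{\sigma_n}| = 1$ a.e. on $\T$, one computes $\int_\T |1 - S_{\sigma_n}|^2 \, d\m = 2 - 2\Re S_{\sigma_n}(0) \to 0$. Thus $S_{\sigma_n} \to 1$ in $L^2(\T, d\m)$, hence in $d\m$-measure on $\T$; passing to a subsequence yields $S_{\sigma_n} \to 1$ a.e. on $\T$, in particular a.e. on $F$. Since $|S_{\sigma_n} - 1|^t \leq 2^t$ and $\omega \in L^1(\T)$, a final dominated convergence argument gives $\int_F |S_{\sigma_n} - 1|^t \, \omega\, d\m \to 0$ along this subsequence.

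Combining both parts, $\|S_{\sigma_n} - 1\|_{\mu,t} \to 0$ along the subsequence; as $S_{\sigma_n} \in [S_\nu]$ and $[S_\nu]$ is closed, we conclude $1 \in [S_\nu]$, and therefore $S_\nu$ is cyclic. Note that the hypotheses enter only through the cyclicity of the $S_{\nu_n}$ used in the second step, while the concluding convergence $S_{\sigma_n} \to 1$ rests solely on $\|\sigma_n\| \to 0$ together with the finiteness of $\mu|\D$ and the integrability of $\omega$, and is therefore insensitive to assumption (A).
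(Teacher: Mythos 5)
Your proof is correct and follows essentially the same route as the paper: the factorization $S_\nu = S_{\nu_n}S_{\nu-\nu_n}$ together with $|S_{\nu-\nu_n}|\leq 1$, the identity $\int_\T |1-S_{\nu-\nu_n}|^2\,d\m = 2\bigl(1-e^{-\|\nu-\nu_n\|}\bigr)$ to get a.e.\ convergence on $\T$ along a subsequence, and dominated convergence over the finite measure $\mu$. The only difference is cosmetic — you phrase the conclusion via $1\in[S_\nu]$ while the paper estimates $\inf_p\|S_\nu p-1\|_{\mu,t}$ directly by the triangle inequality.
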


\begin{proof}
For any analytic polynomial $p$ and integer $n\geq 1$, we have 
\[ 
\begin{aligned}
\norm{ S_{\nu}p -1}_{\mu,t} \leq &  \norm{S_{\nu}p -S_{(\nu - \nu_n)}}_{\mu,t} + \norm{S_{(\nu-\nu_n)} -1}_{\mu,t}  \\
& {} \leq \norm{S_{\nu_n}p-1}_{\mu,t} + \norm{S_{(\nu - \nu_n)}-1}_{\mu,t}. 
\end{aligned}
\] 
By our assumption, we can for any sufficiently large $n \geq 1$ find an analytic polynomial $p_n$ making the quantity $\norm{S_{\nu_n}p_n-1}_{\mu,t}$ arbitrarily small, thus
\[
\inf_{p \in \Po} \norm{S_\nu p -1}_{\mu, t} \leq \norm{ S_{(\nu-\nu_n)} -1}_{\mu,t}.
\]
Now since $\nu$ is a singular continuous measure, we have that $\nu_n \to \nu$ in total variation norm, as $n \to \infty$. From this it follows that $S_{(\nu-\nu_n)}(z) \to 1$ for every  $z \in \D$. Moreover, we also have that
\[
\int_{\T} \abs{1-S_{(\nu-\nu_n)}(\zeta) }^2 dm(\zeta) = 2\left( 1- e^{ - (\nu - \nu_n)(\T) } \right) \to 0,
\]
as $n \to \infty$. This means that we can extract a subsequence $\{n_k\}_k$ with $n_k \to \infty$, such that $S_{(\nu-\nu_{n_k})}(\zeta) \to 1$ for $m$-a.e every $\zeta \in \T$. Now combining these observations and using the fact that $d\mu = d\mu \lvert \D + \omega dm$ is a finite measure on $\cD$, we may apply the dominated convergence theorem to obtain
\[ \lim_{k \to \infty} \norm{S_{(\nu-\nu_{n_k})} -1}_{\mu,t} = 0.
\]  
It follows now that $S_\nu$ is a cyclic vector in $\Po^t(\mu)$.
\end{proof}
With \thref{lemreduc} at hand, we may fix $\rho > 0$. Next we shall use a decomposition result for singular measures which implicitly appears in the work of Roberts in \cite{roberts1985cyclic}. This result is also carefully treated in \cite{ivrii2019}, where Ivrii clarifies some technical points in the construction. Before we present this result we recall the notion of modulus of continuity of a positive measure $\nu$, defined by  \[ \omega_\nu ( \delta) = \sup_{|I|< \delta} \nu(I),
\]
where the supremum is taken over all arcs $I\subset \T$ of Lebesgue measure $|I| < \delta$.

\begin{prop}[Roberts, \cite{roberts1985cyclic}] \thlabel{Robthm} 

Let $\nu$ be a Korenblum-Roberts measure and let $N>1$ be a positive integer and $c>0$ be given. Then there exists a sequence of positive measures $\{\nu_k\}_k$, satisfying the following conditions on their modulus of continuity 

\[ \omega_{\nu_k} \left( \frac{1}{n_k} \right) \leq \frac{c\log(n_k)}{n_k} , \qquad n_k = 2^{2^{N+k}}, \, k=0,1,2, \dots,
\]

such that $\nu$ decomposes as

\[ \nu = \sum_{k=1}^{\infty} \nu_k.
\]

\end{prop}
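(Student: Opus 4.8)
The plan is to follow the strategy implicit in Roberts' original construction, which decomposes a Korenblum-Roberts measure into countably many pieces whose individual modulus of continuity is tightly controlled at a prescribed sequence of dyadic-doubling scales $n_k = 2^{2^{N+k}}$. The key combinatorial engine is a greedy stopping-time argument on dyadic arcs. Concretely, I would proceed as follows. Fix the sequence $n_k = 2^{2^{N+k}}$ and, working one scale at a time starting from $k=0$, partition $\T$ into the $n_k$ dyadic arcs of length $2\pi/n_k$. For each such arc $I$, compare the local mass $\nu(I)$ to the threshold $\tfrac{c\log(n_k)}{n_k}$. On arcs where $\nu$ is not too concentrated relative to this threshold, I extract a piece $\nu_k$ whose restriction to each dyadic arc at scale $n_k$ has mass at most the prescribed bound, and pass the remaining (overly concentrated) mass down to the next, finer scale.

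The heart of the matter is showing that the leftover mass is genuinely exhausted in the limit, so that $\nu = \sum_k \nu_k$ with no residual singular piece surviving. This is exactly where the Korenblum-Roberts hypothesis on $\nu$ enters: because $\nu$ vanishes on every Beurling-Carleson set of measure zero, the mass cannot persistently hide inside arbitrarily short arcs. Quantitatively, the iterated doubling $n_{k+1} = n_k^2$ is precisely calibrated so that the entropy-type sum $\sum_k |A| \log(1/|A|)$ controlling concentration matches the Beurling-Carleson summability condition \eqref{carlesoncond}; the hypothesis that $\nu$ charges no such null set forces the tails of the residual masses to tend to zero. I would verify that the residual measure after $k$ stages has total mass tending to $0$ as $k \to \infty$, which yields the decomposition $\nu = \sum_{k=1}^\infty \nu_k$, while the construction guarantees by design the modulus-of-continuity bound $\omega_{\nu_k}(1/n_k) \leq \tfrac{c\log(n_k)}{n_k}$ for each $k$.

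The main obstacle I anticipate is the bookkeeping needed to make the stopping-time selection both well-defined and mass-conserving: at each scale one must decide how much of the mass on a given dyadic arc to assign to $\nu_k$ versus how much to defer, and one must ensure that the assigned portion respects the uniform bound on \emph{all} arcs of length $<1/n_k$ (not merely the dyadic ones), which requires controlling the mass across arcs straddling dyadic boundaries by combining at most two adjacent dyadic pieces. The delicate quantitative point, also the one Ivrii clarifies in \cite{ivrii2019}, is confirming that the rapidly growing scales $n_k = 2^{2^{N+k}}$ interact correctly with the logarithmic threshold so that the deferred mass strictly decreases and vanishes in the limit; this is the step where the Beurling-Carleson null hypothesis does the essential work. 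Since this proposition is attributed directly to Roberts and treated carefully in \cite{ivrii2019}, I would present the construction in enough detail to fix notation and the choice of scales, and refer to those sources for the finer convergence estimates rather than reproducing every inequality.
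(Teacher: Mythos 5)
The paper gives no proof of this proposition at all --- it is imported verbatim from Roberts \cite{roberts1985cyclic}, with the technical details of the construction treated in \cite{ivrii2019} --- so there is no in-paper argument to compare against. Your sketch correctly reproduces the outline of Roberts' construction (the dyadic stopping-time selection at the scales $n_k$, with the Korenblum--Roberts hypothesis entering to annihilate the residual measure, which the construction forces to be concentrated on a Beurling--Carleson set of Lebesgue measure zero), and, like the paper, it appropriately defers the delicate convergence estimates to the same cited sources.
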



In the similar spirit as \cite{roberts1985cyclic}, we will utilize the following quantitative form of the Corona theorem.

\begin{lemma}[The Corona Theorem] There exists a universal constant $K>0$, such that whenever $f_1,f_2 \in H^\infty$ with $\| f_j \|_\infty \leq 1$, for $j=1,2$, and 
\[ \abs{f_1 (z)} + \abs{f_2(z)} \geq \delta , \qquad  z \in \D,
\]
where $0< \delta \leq 1/2 $, then there exists functions $g_1, g_2 \in H^\infty$ with $\|g_j \|_\infty \leq \delta^{-K}$, so that
\[ f_1 g_1 + f_2g_2 =1.
\]
\end{lemma}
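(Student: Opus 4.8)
The plan is to give the standard $\bar\partial$-proof of the corona theorem in the form due to Wolff (as presented in \cite{garnett}), tracking the dependence of every constant on $\delta$ so as to extract the polynomial bound $\delta^{-K}$. Throughout, $\bar\partial = \tfrac12(\partial_x + i\partial_y)$ denotes the Cauchy--Riemann operator.

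First I would introduce the smooth, non-analytic solution. Set $Q = |f_1|^2 + |f_2|^2$; the hypothesis $|f_1| + |f_2| \ge \delta$ together with Cauchy--Schwarz gives $Q \ge \delta^2/2$ on $\D$. Define $\varphi_j = \overline{f_j}/Q$, so that $f_1\varphi_1 + f_2\varphi_2 = 1$ and $|\varphi_j| \le Q^{-1/2} \le \sqrt2\,\delta^{-1}$. I then seek the genuine (analytic) solution as an antisymmetric correction of the form $g_1 = \varphi_1 + a f_2$ and $g_2 = \varphi_2 - a f_1$ for a single scalar function $a$; regardless of $a$ one has $f_1 g_1 + f_2 g_2 = 1$, so the entire problem is to choose $a$ making both $g_j$ analytic. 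Differentiating the partition identity yields $f_1\bar\partial\varphi_1 + f_2\bar\partial\varphi_2 = 0$, and a short computation using this relation shows that the single scalar equation
\[ \bar\partial a = \varphi_1\bar\partial\varphi_2 - \varphi_2\bar\partial\varphi_1 =: G \]
forces $f_1\bar\partial a = \bar\partial\varphi_2$ and $f_2\bar\partial a = -\bar\partial\varphi_1$, that is $\bar\partial g_1 = \bar\partial g_2 = 0$. Everything thus reduces to solving $\bar\partial a = G$ with good control of $\|a\|_\infty$.

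Next I would estimate the data $G$. Since each $f_j$ is analytic we have $\bar\partial\overline{f_j} = \overline{f_j'}$ and $\bar\partial Q = f_1\overline{f_1'} + f_2\overline{f_2'}$, whence $|\bar\partial\varphi_j| \lesssim Q^{-1}(|f_1'| + |f_2'|) \lesssim \delta^{-2}(|f_1'| + |f_2'|)$ and therefore $|G| \lesssim \delta^{-3}(|f_1'| + |f_2'|)$. Because $\|f_j\|_\infty \le 1$, the Littlewood--Paley estimate makes $|f_j'(z)|^2(1-|z|^2)\,dA(z)$ a Carleson measure with a universal bound; consequently $d\mu_1 := |G(z)|^2(1-|z|^2)\,dA(z)$ is Carleson with $\|\mu_1\| \lesssim \delta^{-6}$, and by Cauchy--Schwarz over Carleson boxes the measure $d\mu_2 := |G(z)|(1-|z|^2)\,dA(z)$ is Carleson with $\|\mu_2\| \lesssim \delta^{-3}$.

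The heart of the matter, and the step I expect to be the main obstacle, is solving $\bar\partial a = G$ with an honest $L^\infty$ bound (not merely BMO) and with explicit $\delta$-dependence. Here I would use Wolff's explicit solution together with a duality argument: to bound the boundary values of $a$ I pair against an arbitrary $h \in H^1$ of unit norm, apply Green's theorem to rewrite $\int_\T a\bar h\,dm$ as an area integral involving $G$ and $h'$, and then invoke Cauchy--Schwarz and the Carleson embedding theorem, the norms $\|\mu_1\|$ and $\|\mu_2\|$ entering precisely as the factors yielding $\|a\|_\infty \lesssim \|\mu_1\|^{1/2} + \|\mu_2\| \lesssim \delta^{-3}$. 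With such an $a$ in hand, the functions $g_1 = \varphi_1 + af_2$ and $g_2 = \varphi_2 - af_1$ are analytic, satisfy $f_1g_1 + f_2g_2 = 1$, and obey $\|g_j\|_\infty \le \|\varphi_j\|_\infty + \|a\|_\infty\|f_{3-j}\|_\infty \lesssim \delta^{-1} + \delta^{-3}$; since $\delta \le 1/2$, this is at most $\delta^{-K}$ for a suitable universal $K$, completing the proof.
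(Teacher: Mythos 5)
The paper does not prove this lemma at all: it is quoted as the classical quantitative Corona theorem (Carleson's theorem with the polynomial bounds coming from Wolff's proof, as in \cite{garnett}), so there is no in-paper argument to compare against. Your outline is exactly that standard proof --- the Koszul-type correction $g_1=\varphi_1+af_2$, $g_2=\varphi_2-af_1$, the Carleson-measure estimates on $|G|^2(1-|z|^2)\,dA$ and $|G|(1-|z|^2)\,dA$, and Wolff's $H^1$-duality solution of $\bar\partial a=G$ --- and the constant tracking correctly yields $\|g_j\|_\infty\lesssim\delta^{-3}$, which for $\delta\le 1/2$ is absorbed into $\delta^{-K}$ for a universal $K$.
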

The point of using the Corona theorem is to find bounded analytic functions $g_k$, such that $S_{\nu_{k}}g_k -1$ has very small $\Po^t(\mu)$-norm, where $\nu_k$ is a singular measure appearing in the decomposition of Proposition \ref{Robthm}.

Our next quest is to establish that singular inner functions associated to singular measures $\nu_k$ are bounded below on considerably large parts of the unit disc $\D$. To this end, we shall now fix $\nu$ to be a Korenblum-Roberts measure with the property that the distance between $\supp \nu$ and $F:= \supp{ \mu |\T }$ is $\rho>0$. Associated to $\nu$, we introduce the following sets

\[ \Omega_{n} := D_n \cup \{ z \in \D:  \text{dist}(z,\supp{\nu})\geq \rho/2 \},
\] 
where $D_n$ denotes the disc centered at origin of radius $1-1/n$. 

\begin{lemma} \thlabel{S0below}
There exists a constant $c_1 >0$, such that whenever $\nu_0$ is a proper restriction of the Korenblum-Roberts measure $\nu$ and $n_0$ is a sufficiently large positive integer such that $\omega_{\nu_0} (1/n_0) \leq c  \log(n_0)/n_0$, for some $c>0$, then the associated singular inner function $S_{\nu_0}$ satisfies 
\[ \abs{S_{\nu_0} (z) } \geq n_0^{-cc_1}, \quad z \in \Omega_{n_0}.
\]
\end{lemma}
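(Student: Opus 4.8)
The plan is to work with the standard formula for the modulus of a singular inner function,
\[
\abs{S_{\nu_0}(z)} = \exp\Big( - \int_\T P_z(\zeta) \, d\nu_0(\zeta) \Big), \qquad P_z(\zeta) = \frac{1-\abs{z}^2}{\abs{\zeta - z}^2},
\]
so that the problem reduces to producing an upper bound of the form $\int_\T P_z \, d\nu_0 \lesssim c \log(n_0)$, uniformly for $z \in \Omega_{n_0}$; exponentiating then yields $\abs{S_{\nu_0}(z)} \geq n_0^{-c c_1}$ with $c_1$ the implied absolute constant. I would split $\Omega_{n_0}$ according to its two defining pieces.

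For the easy piece, suppose $\dist{z}{\supp{\nu}} \geq \rho/2$. Since $\nu_0$ is a restriction of $\nu$, its support lies in $\supp{\nu}$, so $\abs{\zeta - z} \geq \rho/2$ for every $\zeta$ in $\supp{\nu_0}$, and hence $P_z(\zeta) \leq 4/\rho^2$. Thus $\int_\T P_z \, d\nu_0 \leq (4/\rho^2)\nu(\T)$, a bound independent of $n_0$, which gives a fixed positive lower bound for $\abs{S_{\nu_0}(z)}$; this certainly exceeds $n_0^{-c c_1}$ once $n_0$ is large. The whole content of the lemma is therefore in the second piece, $z \in D_{n_0}$.

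For $z \in D_{n_0}$ I would set $h := 1 - \abs{z} \geq 1/n_0$ and let $w := z/\abs{z}$ be the radial projection of $z$ to $\T$. The elementary identity $\abs{\zeta - z}^2 = h^2 + 2\abs{z}\,(1 - \cos(\arg \zeta - \arg w))$ gives $\abs{\zeta - z}^2 \gtrsim h^2 + \abs{\zeta - w}^2$ once $\abs{z} \geq 1/2$, while $1 - \abs{z}^2 \leq 2h$. I then decompose $\T$ into the arc $I_0 = \{\zeta : \abs{\zeta - w} \leq h\}$ together with the dyadic arcs $I_j = \{ \zeta : 2^{j-1} h < \abs{\zeta - w} \leq 2^j h \}$, $j \geq 1$, up to the index where $2^j h \gtrsim 1$. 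On $I_j$ the above estimate yields $P_z \lesssim h / (2^{j}h)^2 = 2^{-2j}/h$. The key input is the modulus-of-continuity hypothesis: an arc of length $\ell \geq 1/n_0$ can be covered by $O(\ell n_0)$ arcs of length below $1/n_0$, so $\nu_0(I_j) \lesssim (2^j h) n_0 \cdot \omega_{\nu_0}(1/n_0) \lesssim (2^j h)\, c \log(n_0)$ for each $j$. Combining these,
\[
\int_{I_j} P_z \, d\nu_0 \lesssim \frac{2^{-2j}}{h} \cdot (2^j h)\, c \log(n_0) = c \log(n_0)\, 2^{-j},
\]
and summing the geometric series over $j \geq 0$ produces $\int_\T P_z \, d\nu_0 \lesssim c\log(n_0)$, as required.

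The main obstacle is the bookkeeping in this last step: one must verify that the Poisson kernel genuinely decays like $2^{-2j}/h$ across the dyadic arcs (which relies on the lower bound $\abs{\zeta-z}^2 \gtrsim h^2 + \abs{\zeta - w}^2$ being uniform in $z \in D_{n_0}$), and that the modulus-of-continuity bound correctly transfers from scale $1/n_0$ to the coarser scale $2^j h$ via the covering argument. Once these two estimates are in place, the geometric summation is routine, and the absolute constant $c_1$ emerging from the implied constants is independent of $\nu_0$, $c$, and $n_0$, which is exactly the uniformity the lemma demands.
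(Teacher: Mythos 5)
Your proof is correct and follows the same two-region split as the paper: a trivial uniform Poisson-kernel bound on the part of $\Omega_{n_0}$ at distance $\geq \rho/2$ from $\supp{\nu}$ (the paper instead uses analytic continuation of $S_\nu$ across that part of $\T$ together with $\abs{S_{\nu_0}} \geq \abs{S_\nu}$, but the content is the same), and the modulus-of-continuity estimate on $D_{n_0}$. The only difference is that for the latter the paper simply cites Lemma 2.2 of Roberts, whereas you reprove it via the standard dyadic decomposition; your argument is a correct rendering of that cited lemma.
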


\begin{proof} 
The bound from below on $D_{n_0}$, for some $c_1 > 0$, follows from the assumption on the modulus of continuity of $\nu_0$ and holds for any $n_0 > 2$, see \cite[Lemma 2.2]{roberts1985cyclic}). The function $S_{\nu}$ extends analytically across $\{ z \in \T : \text{dist}(z,\supp{\nu})\geq \rho/2 \}$ with $|S_{\nu}| = 1$ there, and since $\nu_0$ is a restriction of the measure $\nu$, we have $|S_{\nu_0}| \geq |S_\nu|$ on $\D$. Thus if $n_0$ is large enough, then $|S_{\nu_0}| > 1/2$ on $\Omega_{n_0} \setminus D_{n_0}$, which completes the proof.
\end{proof}


We shall now proceed by constructing an appropriate companion to the singular inner function $S_{\nu_0}$ appearing in \thref{S0below}, which will allow us to apply the Corona theorem. Indeed, we shall construct a sequence of analytic functions in the closed unit ball of $H^\infty$, which are bounded from below on $\D \setminus \Omega_n$, but have comparatively smaller $\Po^{t}(\mu)$-norms. The lemma below is technical and revolves around choosing specific parameters in a careful way.


\begin{lemma} \thlabel{h_n} 
Let $c_1>0$ denote the constant in \thref{S0below}, $K>0$ denote the constant in the Corona theorem above and fix a small $c>0$, so that $3cc_1 K <  \beta  $, where $\beta>0$ is as in (B'). Then there exists a positive integer $N_0$ and a sequence of functions $\{f_n\}_{n \geq N_0}$ inside the closed unit ball of $H^\infty$, satisfying the estimates

\begin{enumerate}

\item[(i)] 
\[ \abs{f_n (z)} \geq n^{-cc_1}, \quad z \in  \D \setminus \Omega_n ,
\]

\item[(ii)] 
\[ \norm{f_n }_{\mu, t} \leq n^{-3cc_1 K}.
\]
\end{enumerate}

\end{lemma}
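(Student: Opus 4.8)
The plan is to realize $f_n$ as a product $f_n = z^{k_n}\Psi^{m_n}$, where the monomial factor drives the $\mu|\D$-part of the norm down through (B'), while a power of a fixed outer function $\Psi$ suppresses $f_n$ on the carrier $F = \supp{\mu|\T}$ without spoiling the lower bound near $\supp{\nu}$. Since $\rho = \dist{\supp{\nu}}{F} > 0$, I would first fix an open set $V \subseteq \T$ which contains $\supp{\nu}$, is disjoint from $F$, and is chosen so that for all large $n$ the radial projection $\zeta^\ast = z/|z|$ of every point $z$ in the crescent $\D \setminus \Omega_n = \{|z| > 1-1/n\} \cap \{\dist{z}{\supp{\nu}} < \rho/2\}$ lies in a fixed compact subset of $V$ bounded away from $\T \setminus V$; this is possible because such projections are within $\rho/2 + 1/n$ of $\supp{\nu}$, whereas $F$ is at distance $\rho$. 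Let $\Psi$ be the bounded outer function with $|\Psi| = 1$ on $V$ and $|\Psi| = \delta_0$ on $\T \setminus V$ for a fixed $\delta_0 \in (0,1)$; then $\norm{\Psi}_\infty = 1$ and $\Psi$ extends analytically across $V$ with $|\Psi| \equiv 1$ there. I set $k_n = n$ and $m_n = \lceil A \log n \rceil$, where $A$ is a constant fixed below, so that each $f_n$ lies in the closed unit ball of $H^\infty$.

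For (i), the monomial gives $|z|^{k_n} = |z|^n \geq (1-1/n)^n$ on the crescent, which tends to $e^{-1}$ and hence exceeds $n^{-cc_1}$ for all large $n$. For the outer factor I would use that $\zeta^\ast$ stays in a fixed compact subset of $V$, so that $\Psi$ is analytic with $|\Psi'| \leq M$ on a fixed neighbourhood $W$ of these projections and $|\Psi(\zeta^\ast)| = 1$. Since $|z - \zeta^\ast| = 1 - |z| < 1/n$, for large $n$ the segment from $z$ to $\zeta^\ast$ lies in $W$, and the mean value inequality yields $|\Psi(z)| \geq 1 - M/n$, whence $|\Psi(z)|^{m_n} \geq (1 - M/n)^{A\log n} \to 1$. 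Multiplying the two factors, $|f_n(z)| \to e^{-1}$ uniformly on the crescent, so $|f_n| \geq n^{-cc_1}$ on $\D \setminus \Omega_n$ once $n \geq N_0$.

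For (ii), I split the norm into its $\mu|\D$- and $\mu|\T$-contributions. Because $|\Psi| \leq 1$ pointwise we have $|f_n| \leq |z|^{k_n}$ on $\D$, so (B') gives
\[
\int_\D |f_n|^t\, d\mu|\D \leq \int_\D |z|^{k_n t}\, d\mu|\D = \norm{z^{k_n}}_{\mu|\D, t}^{t} \lesssim k_n^{-\beta t} = n^{-\beta t},
\]
with the exponent adjusted in the evident way when $t < 1$, a case already incorporated into the statement of (B'). On $F \subseteq \T \setminus V$ one has $|z| = 1$ and $|\Psi| = \delta_0$, so the boundary contribution equals $\delta_0^{m_n t}\int_F \omega\,dm \lesssim n^{-A t \log(1/\delta_0)}$; fixing $A$ large enough that $A\log(1/\delta_0) \geq \beta$ bounds this by the $\mu|\D$-part. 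Hence $\norm{f_n}_{\mu,t} \lesssim n^{-\beta}$, and since $3cc_1 K < \beta$ by hypothesis, $\norm{f_n}_{\mu,t} \leq n^{-3cc_1 K}$ for all $n$ past some $N_0$. Enlarging $N_0$ so that both (i) and (ii) hold finishes the construction.

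The crux, and the reason for the two-factor design, is the competition between (i) and (ii). Condition (B') controls $\mu|\D$ only through monomials, which forces the Bergman-type decay onto $z^{k_n}$ and at the same time caps $k_n$: taking $k_n \asymp n$ keeps $(1-1/n)^{k_n}$ bounded below by a positive constant, whereas $k_n \gg n\log n$ would drive $|z|^{k_n}$ below $n^{-cc_1}$ on the crescent and destroy (i). The outer factor must then be strong enough on $F$ to beat $n^{-\beta}$ yet weak enough on the crescent to preserve that lower bound; this is exactly achieved by $m_n \asymp \log n$, since then $m_n(1-|z|) = O(n^{-1}\log n) \to 0$ near the boundary while $\delta_0^{m_n}$ remains a fixed negative power of $n$ on $F$. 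Carrying out these two opposing estimates simultaneously is the main obstacle; everything else is bookkeeping with the exponents, anchored by the hypothesis $3cc_1 K < \beta$.
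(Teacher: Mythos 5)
Your proof is correct and follows essentially the same strategy as the paper's: both take $f_n = z^n$ times an outer factor, using (B') to control the $\mu|\D$-part via the monomial and an outer function that is small on $F$ but, thanks to the positive distance $\rho$, stays bounded below on $\D \setminus \Omega_n$. The only difference is cosmetic --- you raise a fixed two-valued outer function to a power $\asymp \log n$ and invoke its analytic continuation across a neighbourhood of $\supp{\nu}$, whereas the paper uses an $n$-dependent outer function with exponent $\sigma n \log n$ concentrated on $F$ and a direct Poisson-kernel estimate; the underlying mechanism (the logarithm of the outer factor being $O\big((1-|z|)\log n/\rho^2\big)$ on the crescent) is identical.
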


\begin{proof}

Note that according to assumption (B') and $3cc_1 K < \beta$, we see that for all sufficiently large $n$, the monomials $z^{n}$ will satisfy the estimate
\begin{equation}  \label{MoN}
\norm{z^{n}}_{\mu|\D , t} \leq  \frac{1}{2} n^{-3cc_1 K}.
\end{equation}
Let $\sigma >0$ be a constant to be specified later 
and define the outer function  
\[ 
h_n (z) = \exp \left(  -\sigma n \log (n)  \int_F \frac{\zeta + z}{\zeta - z} dm(\zeta) \right), \quad z\in \D.
\]
Then $\abs{h_n} = n^{-\sigma n}$ $m$-a.e on $F$ and $\abs{h_n}=1$ on $\T \setminus F$. We now set $f_n(z) := z^{n} h_{n}(z)$ and claim that $\{f_n\}$ is our candidate, that will eventually satisfy both $(i)$ and $(ii)$. In light of \eqref{MoN}, we note that in order for $\{f_n\}$ to satisfy $(ii)$, it suffices to find a positive integer $N_0$, such that 
\begin{equation}\label{iired}
\left(\int_{F} \abs{h_n(\zeta)}^{t}\omega(\zeta) dm(\zeta) \right)^{\min(1,1/t)} \leq  \frac{1}{2} n^{-3cc_1 K}, \quad n \geq N_0.
\end{equation}
However, since $\abs{h_n} = n^{-\sigma n}$ $m$-a.e on $F$ and $\omega \in L^1(\T, m)$, it readily follows that for any fixed $\sigma >0$, we can choose $N_0 >0$ (depending on $\sigma$) such that \eqref{iired} holds for all $n \geq N_0$. It now remains to check that $\{f_n \}$ also eventually satisfies $(i)$. Recall that by definition of $\rho >0$, every $z \in \D \setminus \Omega_n$ must necessarily satisfy $\text{dist}(z,F) \geq \rho/2$, hence 
\[ - \log \abs{h_n (z)} = \sigma n \log ( n ) \int_{F} \frac{1-|z|^2}{\abs{\zeta - z}^2} dm(\zeta) \leq  \frac{8\sigma}{\rho^2} \log (n), \quad z\in \D \setminus \Omega_n.
\] 
This implies that 
\begin{equation*} \label{Lbdd}
 \abs{f_n (z)} = \abs{z}^{n} \abs{h_n (z)} \geq (1-1/n)^n n^{ -8\sigma / \rho^2} \geq 4^{-1}n^{- 8\sigma / \rho^2}, \quad z \in \D \setminus \Omega_n.
\end{equation*}
In the last step, we used the elementary inequality $(1-1/n)^n \geq 1/4$, for all $n \geq 1$. Thus in order to satisfy condition $(i)$, we only need the simple inequality
\[
n^{cc_1} \geq 4n^{8\sigma / \rho^2} 
\] 
to hold for all large $n$. However, choosing a fixed $0 < \sigma < cc_1\rho^2 /8$, it follows that the inequality above is easily met, thus $(i)$ holds for all sufficiently large $n$. Combining with our previous observations about $\{h_n\}$ eventually satisfying \eqref{iired}, we conclude that there exists an integer $N_0>0$, such that the sequence $\{f_n\}_{n\geq N_0}$ belongs to the closed unit ball of $H^\infty$ and satisfies both conditions $(i)$ and $(ii)$.
\end{proof}


We now turn to the final proof of \thref{cyclicitytheorem}. We shall follow the ideas in \cite{roberts1985cyclic} rather closely, but there are some minor differences, mainly attributed to the presence of \thref{h_n} and the fact that a part of our measure $\mu$ lives on $\T$. For the reader's convenience, we shall include a proof.

\begin{proof}[Proof of \thref{cyclicitytheorem}] 

Recall the definition of $\rho$ as the distance between $\supp{\nu}$ and $F$, which according to \thref{lemreduc} can be assumed positive and fixed. Let $c>0$ according to \thref{h_n} and let $n_0$ and $N_0$ denote the constants from \thref{S0below} and \thref{h_n}, respectively. We now pick an arbitrary integer $N> \max( n_0, N_0)$ and apply \thref{Robthm} with parameters $c$ and $N$ as above, in order to decompose our Korenblum-Roberts measure $\nu= \sum_{j=1}^\infty \nu_j$, where each measure $\nu_j$ satisfies 
\[
\omega_{\nu_j} \left(\frac{1}{n_j} \right) \leq  \frac{c \, \log n_j }{n_j}
\]
with $n_j = 2^{2^{N+j}}$. Note that according to \thref{S0below}, we can find a constant $c_1>0$, such that for any $j$, we have 
\[ \abs{S_{\nu_j}(z) } \geq n_j^{-cc_1}, \quad z\in \Omega_{n_j}.
\]
We may also apply \thref{h_n}, in order to obtain a sequence of functions $\{f_{n_j}\}_{j} \subset H^\infty$ with $\norm{f_{n_j}}_{\infty}\leq 1$, such that 
\[ \abs{f_{n_j} (z)} \geq n_j^{-cc_1}, \quad z \in \D \setminus \Omega_{n_j}.
\]
Combining, we have that $\abs{S_{\nu_j}(z)} + \abs{f_{n_j}(z)} \geq n_j^{-cc_1}$ for all $z\in \D$. Invoking the Corona theorem for each $j$, we obtain functions $g_j,h_j \in H^\infty$ with $\norm{g_j}_\infty , \norm{h_j}_\infty \leq n_j^{cc_1 K}$, that solve the equation $S_{\nu_{j}}g_j + f_{n_j}h_j =1$ on $\D$. Using this and $(ii)$ of \thref{h_n}, we get
\begin{equation}\label{S0 est} 
\norm{S_{\nu_j}g_j -1}_{\mu,t} \leq \norm{f_{n_j}}_{\mu,t} \cdot \norm{ h_j}_{\infty} \leq n_j^{-3cc_1K} \cdot n_j^{cc_1K} =  n_j^{-2cc_1K}.
\end{equation}
With this observation at hand, we now combine:
\begin{gather}
\norm{S_{\nu_1 + \nu_2}g_1 g_2 -1}_{\mu,t} \leq \norm{S_{\nu_1}g_1}_{\infty} \norm{S_{\nu_2}g_2 -1}_{\mu,t} + \norm{S_{\nu_1}g_1 -1}_{\mu,t}  \nonumber \\ 
 \leq \left(n_1 / n^2_2 \right)^{cc_1K}+ n_1^{-2cc_1K}. \nonumber
\end{gather}

Setting  $\nu^M := \sum_{1\leq j \leq M}\nu_j$ and iterating this procedure, we actually get for any positive integer $M>1$:
\[
\inf_{p \in \Po} \, \norm{S_{\nu^M}p-1}_{\mu,t} \leq \frac{1}{n_{1}^{2cc_1 K}} + \sum_{j=2}^{M} \left( \frac{n_1 \dots n_{j-1}}{n^2_j} \right)^{cc_1 K} \leq C2^{-Ncc_1K }.
\]
Here $C>0$ is an absolute constant, independent of $M,N$. With this at hand, we deduce that for any $M>1$:

\begin{gather}
\inf_{p \in \Po} \norm{S_{\nu}p -1}_{\mu, t} \leq  \norm{S_{(\nu-\nu^M)} - 1}_{\mu,t} + \inf_{p \in \Po} \norm{S_{\nu^M}p -1}_{\mu, t}  \nonumber \\ \leq \norm{S_{(\nu-\nu^M)} - 1}_{\mu,t} + C2^{-Ncc_1K}. \label{nu^M}
\end{gather}
An argument similar to that of the proof of \thref{lemreduc}, allows us to extract a subsequence $\{M_j\}_j$, such that 
\[
\lim_{j \to \infty} \norm{S_{(\nu-\nu^{M_j})} - 1}_{\mu,t} = 0.
\]
Returning back to \eqref{nu^M} with $M= M_j$ and taking $j \to \infty$, we obtain 
\[
\inf_{p \in \Po} \norm{S_{\nu}p -1}_{\mu, t} \leq C2^{-Ncc_1 K}.
\]
Since the parameter $N>1$ in the decomposition of \thref{Robthm} can be chosen arbitrary large, we finally conclude that $S_\nu$ is a cyclic vector for the shift operator on $\Po^t(\mu)$.


\end{proof}

\bibliographystyle{siam}
\bibliography{mybib}

\Addresses

\end{document}